\newtheorem{thm}{Theorem}
\newtheorem{prop}[thm]{Proposition}
\newtheorem{lem}[thm]{Lemma}
\newtheorem{cor}[thm]{Corollary}
\newtheorem*{defi*}{Definition}
\newtheorem{assu}{Assumption}
\newcommand{\po}{\left(}
\newcommand{\pf}{\right)}
\newcommand{\co}{\left[}
\newcommand{\cf}{\right]}
\newcommand{\cco}{\llbracket}
\newcommand{\ccf}{\rrbracket}
\newcommand{\R}{\mathbb R}
\newcommand{\W}{\mathcal W}
\newcommand{\dd}{\text{d}}
\newcommand{\na}{\nabla}
\newcommand\ent[2]{\mathcal H \left(\left.#1 \right|#2\right)}
\title{Long-time behaviour and propagation of chaos for mean field kinetic particles}
 \author{Pierre Monmarch\'e\footnote{Universit\'e de Neuch\^atel, rue Emile Argand 11, 2000 Neuch\^atel, Switzerland}}
\date{August 18, 2016}
\begin{document}

\maketitle

\begin{abstract}
The trend to equilibrium in large time is studied for a large particle system associated to a Vlasov-Fokker-Planck equation  in the presence of a convex external potential, without smallness restriction on the interaction. From this are derived uniform in time propagation of chaos estimates, which themselves yield in turn an exponentially fast convergence for the semi-linear equation itself. The approach is quantitative. 
\end{abstract}

\section{Settings and main results}

The Vlasov-Fokker-Planck equation reads
\begin{eqnarray}\label{EqVlasovFP}
\lefteqn{\partial_t m_t +y \cdot \na_x m_t\ =}\notag\\  &  & \na_y \cdot \po \frac{\sigma^2}{2} \na_y m_t + \po \int \na_x U\po x, u \pf   m_t(u,v) \dd u \dd v + \gamma y\pf m_t\pf
\end{eqnarray}
where $m_t(x,y)$ is a density at time $t$ of particles at point $x\in \R^d$ with velocity $y\in\R^d$, $\sigma,\gamma>0$, $\na$ and $\na\cdot$ stand for the gradient and divergence operators and the potential $U$ is a function from $\R^{2d}$  to $\R$. 
This equation is naturally linked to the stochastic system of $N$ kinetic particles $(X_i,Y_i)_{i\in\cco 1,N\ccf}$ that solves 
\begin{eqnarray}\label{EqSystemparticul}
\forall i\in \cco 1,N\ccf & &\left\{\begin{array}{rcl}
\dd X_i & = & Y_i \dd t\\
\dd Y_i & = & - \gamma Y_i \dd t - \po \frac{1}{N}\underset{j=1}{\overset{N}\sum}  \na_x U \po X_i, X_j\pf \pf\dd t + \sigma \dd B_i 
\end{array} \right. 
\end{eqnarray} 
with the initial conditions $(X_i(0),Y_i(0))$ being i.i.d. random variables of law $m_0$, independent from the standard Brownian motion $B$. Here, \emph{naturally linked} means, depending on one's point of view, that for large $N$ the semi-linear PDE is an approximation of the particle system, or that the particle system is an approximation of the semi-linear PDE:  for large $N$,  the random empirical law $M_t^{N}=\frac1N \sum_{i\in \cco 1,N\ccf} \delta_{X_i,Y_i}$ (where $\delta_{x,y}$ is the Dirac mass at point $(x,y)$) should behave like the deterministic solution $m_t$ of \eqref{EqVlasovFP}. 

Equation \ref{EqSystemparticul} describes a mean field dynamics in the sense each  particle is equally influenced by all the others at once.  As $N$ goes to infinity, two given particles should be less and less correlated, and a given particle $(X_i,Y_i)$ should behave like $(\overline{X}_i,\overline{Y}_i)$ which solves
\begin{eqnarray}\label{EqParticulNonLinea}
  & &\left\{\begin{array}{rcl}
\dd \overline{X}_i  &= &\overline{Y}_i \dd t\\
\dd \overline{Y}_i &= & - \gamma \overline{Y}_i   \dd t - \po \int  \na_x U\po \overline{X}_i, u\pf  m_t(u,v)\dd u\dd v\pf\dd t + \sigma \dd B_i
\end{array} \right. 
\end{eqnarray}
with $(\overline{X}_i(0),\overline{Y}_i(0))= (X_i(0),Y_i(0))$. This is the so-called propagation of chaos phenomenon, and it is equivalent for interchangeable particles to the convergence of the empirical measure of the particle system to $m_t$ (see \cite{Sznitman}). 
 Note the $(\overline{X}_i,\overline{Y}_i)$'s are i.i.d. random variables with law $m_t$.

Aside from the propagation of chaos ($N\rightarrow\infty$) another natural question is the long time  behaviour ($t\rightarrow\infty$) of both $m_t$ and  $(X_i,Y_i)_{i\in\cco 1,N\ccf}$. There is a large literature for the corresponding space homogeneous model (the Mc Kean-Vlasov equation, which can also be seen as an overdamped version of \eqref{EqVlasovFP} when the particles have no mass), or when there is no interaction (i.e. when $U(x,c)$ only depends on $x$, which is the kinetic Fokker-Planck or Langevin equation); see the introductions of \cite{BolleyGuillinMalrieu,BolleyGentilGuillin} for references.

However obtaining explicit speed of convergence toward equilibrium for the Vlasov-Fokker-Planck equation (at least when there is a unique equilibrium, which is false in general, see \cite{Tugaut2015}), coping with both the kinetic dynamics and the interaction, is more  difficult.   There are several results for small interactions  \cite{HerauThomann,Herau2007,Villani2009} or when the forces are close to be linear \cite{BolleyGuillinMalrieu}. See also  \cite{Esposito} for a case with possibly several equilibria.

In the recent work of H\'erau and Thomann \cite{HerauThomann}, since the interaction is small, exponential convergence to equilibrium is established from the results on the linear Fokker-Planck equation by considering the semi-linearity as a perturbation. Our strategy is also to use the linear case, but by replacing the non-linearity by interaction in large dimension (following the ideas of Malrieu \cite{Malrieu2001}). As a consequence we replace the smallness condition of the interaction by convexity assumptions:

\begin{assu}\label{Hypo1} The potential $U(x,u) = V(x) + V(u) + W(x-u)$, where $V$ and $W$ are smooth with all their derivatives of order larger than 2 bounded. The exterior potential $V$ is strictly convex in the sense there exists $c_1>0$ such that for all $x,u\in\R^d$, $u\cdot \na^2 V(x) u \geq c_1|u|^2$ where $\na^2 V$ is the Hessian matrix of $V$. The interaction potential $W$ is even and there exists $c_2<\frac12c_1$ such that  for all $x,u\in\R^d$, $u\cdot \na^2 W(x) u \geq -c_2|u|^2$.

The initial law $m_0$ admits a smooth density in $L\log L$ with respect to the Lebesgue measure and a finite second moment. 
\end{assu}
Note that the term $V(u)$ in the definition of $U(x,u)$ does not alter the force $\na_x U(x,u)$. It is added  only for the sake of symmetry and of the definition of the global potential $U_N$ in Section~\ref{SectionSystemedeParticule}.

These conditions discard Coulomb interaction forces, but it treats mollified approximations of those, as in \cite{Herau2007}.  Under Assumption \ref{Hypo1}, as we will see, the solution $m_t$ of \eqref{EqVlasovFP} is well defined, along with the processes $Z_N=\po (X_i,Y_i)\pf_{i\in\cco 1,N\ccf}$ and  $\overline Z_N=\po (\overline X_i,\overline Y_i)\pf_{i\in\cco 1,N\ccf}$. We call $m_t^{(N)}$ the law of $Z_N$ when $m_0^{(N)} = m_0^{\otimes N}$, while the law of $\overline Z_N$ is $m_t^{\otimes N}$. The process $Z_N$ admits a unique invariant law, denoted by $m_\infty^{(N)}$. For two probability laws $\nu$ and $\mu$ we write
\[\ent{\nu}{\mu} = \left\{ \begin{array}{ll}
\int \ln \po \frac{\dd \nu}{\dd \mu}\pf \dd \nu & \text{if }\nu \ll \mu\\
+\infty & \text{else}
\end{array}\right.\]
the relative entropy of $\nu$ with respect to $\mu$. Under Assumption \ref{Hypo1}, at initial time, $\ent{m_0^{(N)}}{m_\infty^{(N)}} <\infty$ (more precisely, see Lemma \ref{LemPropMalrieu} below).

\begin{thm}\label{TheoNLine}
Under Assumption \ref{Hypo1}, there exist $C,\chi>0$ that depends only on $U,\gamma,\sigma$ such that for all $N$ and $t\geq 0$,
\begin{eqnarray}
\ent{m_t^{(N)}}{m_\infty^{(N)}} & \leq & Ce^{-\chi t} \ent{m_0^{(N)}}{m_\infty^{(N)}}.
\end{eqnarray}
\end{thm}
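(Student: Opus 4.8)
The plan is to recognize the $N$-particle system \eqref{EqSystemparticul} as a single kinetic Langevin diffusion on $\R^{2dN}$ and to apply an entropic hypocoercivity estimate whose constants do not depend on $N$. Because $U(x,u)=V(x)+V(u)+W(x-u)$ with $W$ even, the force on the $i$-th particle is $-\na_{x_i}U_N(X_1,\dots,X_N)$ with $U_N(x)=\sum_{i}V(x_i)+\tfrac{1}{2N}\sum_{i,j}W(x_i-x_j)$, so $Z_N=(X_i,Y_i)_i$ solves $\dd X = Y\,\dd t$, $\dd Y = -\gamma Y\,\dd t - \na U_N(X)\,\dd t + \sigma\,\dd B$, and $m_\infty^{(N)}\propto \exp\big(-\tfrac{2}{\sigma^2}(U_N(x)+|y|^2/2)\big)$. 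Thus Theorem~\ref{TheoNLine} is a quantitative, dimension-free trend-to-equilibrium statement for a \emph{linear} degenerate Fokker--Planck equation, and the whole point is uniformity in $N$.

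First I would record the two uniform-in-$N$ bounds on $U_N$ that drive everything. For $u=(u_1,\dots,u_N)\in\R^{dN}$ one has $u^{\top}\na^2 U_N(x)u=\sum_i u_i^{\top}\na^2 V(x_i)u_i+\tfrac1{2N}\sum_{i,j}(u_i-u_j)^{\top}\na^2 W(x_i-x_j)(u_i-u_j)$; since $\sum_{i,j}|u_i-u_j|^2=2N|u|^2-2|\sum_i u_i|^2\le 2N|u|^2$, the hypotheses $\na^2 V\ge c_1 I$ and $\na^2 W\ge -c_2 I$ give $\na^2 U_N\ge (c_1-c_2)I$, hence — as $c_2<\tfrac12 c_1$ — a uniform convexity constant $\rho:=c_1-c_2>0$; symmetrically the boundedness of $\na^2 V,\na^2 W$ gives $\|\na^2 U_N\|\le L$ with $L$ independent of $N$. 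In particular $U_N(x)+|y|^2/2$ is $\min(\rho,1)$-convex on $\R^{2dN}$, so by the Bakry--\'Emery criterion $m_\infty^{(N)}$ satisfies a logarithmic Sobolev inequality with constant $C_{\mathrm{LSI}}$ depending only on $\rho$ and $\sigma$, not on $N$.

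Then I would run the modified-entropy argument. Writing $h_t=m_t^{(N)}/m_\infty^{(N)}$, introduce the twisted functional $\mathcal E(t)=\ent{m_t^{(N)}}{m_\infty^{(N)}}+\int\big(a|\na_x\log h_t|^2+2b\,\na_x\log h_t\cdot\na_y\log h_t+c|\na_y\log h_t|^2\big)h_t\,\dd m_\infty^{(N)}$ with fixed $a,b,c>0$, $b^2<ac$, so that $\ent{m_t^{(N)}}{m_\infty^{(N)}}\le\mathcal E(t)$. Differentiating along the evolution, the degenerate diffusion produces dissipation only in the $y$-variables, while the transport term $y\cdot\na_x$ creates, through the commutator $[\y,\,y\cdot\na_x]=\na_x$, the missing coercivity in $x$, at the cost of cross terms $\na_x\log h\cdot(\na^2 U_N)\na_y\log h$ absorbed by $\|\na^2 U_N\|\le L$, plus the usual second-order (Hessian-of-Fisher-information) contributions that are controlled as in Villani's hypocoercivity computation. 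Choosing $a,b,c$ as functions of $\gamma,\sigma,\rho,L$ only so that the resulting quadratic form in the Fisher-type quantities is negative definite, and using the $N$-free LSI to dominate $\ent{m_t^{(N)}}{m_\infty^{(N)}}$ by those quantities, yields $\tfrac{\dd}{\dd t}\mathcal E(t)\le -\lambda\,\mathcal E(t)$ with $\lambda>0$ depending only on $U,\gamma,\sigma$. Since the kinetic Fokker--Planck equation is hypoelliptic, a short-time regularization estimate gives, at a fixed time (say $t_0=1$), $\mathcal E(1)\le C'\,\ent{m_1^{(N)}}{m_\infty^{(N)}}$ with $C'$ independent of $N$; combining with the monotonicity of $t\mapsto\ent{m_t^{(N)}}{m_\infty^{(N)}}$ and with $\ent{m_t^{(N)}}{m_\infty^{(N)}}\le\mathcal E(t)\le e^{-\lambda(t-1)}\mathcal E(1)$ for $t\ge 1$, one obtains the claim with $\chi=\lambda$ and a suitable $C$.

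The main obstacle is exactly this uniformity: the convexity and boundedness of $U_N$ are uniform in $N$ essentially for free, but one must check that the admissible coefficients $a,b,c$ (and hence $\lambda$) can be selected using only $\rho,L,\gamma,\sigma$ and the dimension-free LSI constant, and — more delicately — that the short-time bound $\mathcal E(t_0)\lesssim \ent{m_{t_0}^{(N)}}{m_\infty^{(N)}}$ holds with an $N$-independent constant. This last regularization step, à la Villani, is the technically heaviest point, and it is what makes the estimate genuinely uniform in the number of particles, as required for the propagation-of-chaos consequences that follow.
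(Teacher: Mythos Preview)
Your strategy is sound and your identification of the two dimension-free ingredients (the uniform bound $\|\na^2 U_N\|\le L$ and the uniform log-Sobolev constant coming from Bakry--\'Emery) is exactly right; these are precisely Lemmas~\ref{LemHessienneUN} and~\ref{LemSobolevmN}. Incidentally, your lower bound $\na^2 U_N\ge (c_1-c_2)I$ is in fact sharper than the paper's $(c_1-2c_2)$, since $\tfrac1{2N}\sum_{i,j}|u_i-u_j|^2\le |u|^2$; and note a small slip in your invariant density, which should carry the factor $\tfrac{2\gamma}{\sigma^2}$, not $\tfrac{2}{\sigma^2}$.

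Where you diverge from the paper is in the hypocoercive machinery itself. The paper does \emph{not} run the Villani twisted-entropy computation; instead it invokes a ready-made theorem (Theorem~\ref{Theo10Gamma}, from \cite{MonmarcheGamma}) formulated via a commutator hierarchy $C_0=\na_y$, $C_1=\na_x$, with explicit constants depending only on $m$, $\rho$, $\lambda$, $\Lambda$, $N_c$, $\eta$. Checking the hypotheses amounts to the two commutators $[B_0,\na_y]=\na_x+\gamma\na_y$ and $[B_0,\na_x]=-\na^2 U_N\,\na_y$, after which one reads off $m=\tfrac{2}{\sigma^2}+\gamma^2+L^2$ and $\eta$ from Lemma~\ref{LemSobolevmN}, all manifestly $N$-free. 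The paper then remarks explicitly that your route --- combining Theorems~28 and~A.15 of \cite{Villani2009} --- would in principle also work, but that it would require ``a thorough examination of the somewhat tedious computations'' to verify that Villani's constants are uniform in $N$, and that this was not carried out. So the trade-off is: the black-box of Theorem~\ref{Theo10Gamma} delivers verified $N$-independence and an explicit (if poor) rate \eqref{EqborneChi} in a few lines, whereas your approach is more classical and self-contained but leaves exactly the burden you flag --- the $N$-free short-time regularization $\mathcal E(t_0)\lesssim\ent{m_{t_0}^{(N)}}{m_\infty^{(N)}}$ --- as an unproven (though plausible) claim.
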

Note that $m_t^{(N)}$ is a solution of a (large dimensional) linear Fokker-Planck equation, for which the exponential decay of the entropy is already known (see e.g. \cite{Villani2009}). The key point in Theorem~\ref{TheoNLine} is that $C$ and $\chi$ do not depend on $N$. This enables us to prove the following:

\begin{cor}\label{CorPW2}
Under Assumption  \ref{Hypo1}, Equation \eqref{EqVlasovFP} admits a unique equilibrium $m_\infty$. Moreover, for any $\beta>d+2$ there exist $K,N_0 >0$ that depends only on $U,\gamma,\sigma,\beta$ and $m_0$ such that for all   $\varepsilon,t$ and $N \geq N_0 \po 1 \wedge \varepsilon\pf^{-\beta}$,
\begin{eqnarray*}
\mathbb P\po \mathcal W_2\po M_t^N , m_\infty \pf\geq \varepsilon \pf & \leq &  \frac{K}{\varepsilon^2}\po e^{-\chi t} +\frac{1}{ N} \pf
\end{eqnarray*}
where $\chi$ is given by Theorem \ref{TheoNLine} and $\mathcal W_2$ is the Wasserstein distance
\begin{eqnarray*}
\mathcal W_2^2 \po \nu_1,\nu_2\pf &=&  \inf \left\{\mathbb E\po|A_1-A_2|^2\pf,\ Law(A_i)= \nu_i \text{ for }i=1,2\right\}.
\end{eqnarray*}
\end{cor}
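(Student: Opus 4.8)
The plan is to combine the $N$-uniform entropic convergence of Theorem \ref{TheoNLine} with standard quantitative propagation of chaos à la Malrieu, and then to pass to the limit $N\to\infty$ to construct $m_\infty$. First I would derive, from Theorem \ref{TheoNLine} together with a logarithmic Sobolev inequality for $m_\infty^{(N)}$ (itself $N$-uniform, since $m_\infty^{(N)}$ is a Gibbs-type measure for a uniformly convex-at-infinity Hamiltonian; this should be available from the earlier sections), a Talagrand-type control giving $\mathcal W_2^2\po m_t^{(N)}, m_\infty^{(N)}\pf \leq C' e^{-\chi t}$ with $C'$ independent of $N$. Since $m_t^{(N)}$ is the law of $Z_N$ started from $m_0^{\otimes N}$, this bounds $\E\po \frac1N \sum_i |(X_i,Y_i) - (X_i^\infty,Y_i^\infty)|^2\pf$ for a suitable coupling with a draw $Z_N^\infty$ from $m_\infty^{(N)}$. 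On the other hand, the coupling defining \eqref{EqParticulNonLinea} (same Brownian motions, same initial data) combined with the convexity Assumption \ref{Hypo1} and a Grönwall argument gives the classical $\E\po |(X_i,Y_i)-(\overline X_i,\overline Y_i)|^2\pf \leq K/N$, \emph{uniformly in $t$} — this uniformity is exactly what the convexity buys us, as in Malrieu's work.

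Next I would assemble these into a bound on $\mathcal W_2\po M_t^N, m_\infty\pf$, once $m_\infty$ is known to exist. For existence and uniqueness of $m_\infty$: the laws $m_t^{(N)}$ are $1$-particle marginals; by the uniform-in-time chaos estimate the one-particle marginal of $m_t^{(N)}$ is $\mathcal W_2$-close (distance $\lesssim N^{-1/2}$) to $m_t$, while by the contraction estimate $m_t^{(N)}$ (as a whole, hence its marginals) is $\mathcal W_2$-close to $m_\infty^{(N)}$; letting first $t\to\infty$ and then $N\to\infty$ shows the first marginals of $m_\infty^{(N)}$ form a $\mathcal W_2$-Cauchy sequence, whose limit $m_\infty$ is a stationary solution of \eqref{EqVlasovFP}; uniqueness follows because any two equilibria would each be such a limit (or, more directly, from the exponential convergence of $m_t$ proved momentarily). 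Then, for fixed $t,N$, I triangulate:
\begin{eqnarray*}
\mathcal W_2\po M_t^N, m_\infty\pf &\leq& \mathcal W_2\po M_t^N, \overline M_t^N\pf + \mathcal W_2\po \overline M_t^N, m_t\pf + \mathcal W_2\po m_t, m_\infty\pf,
\end{eqnarray*}
where $\overline M_t^N = \frac1N\sum_i \delta_{\overline X_i,\overline Y_i}$. The first term is controlled in $L^2$ by the $K/N$ chaos bound; the second is the empirical-measure fluctuation of $N$ i.i.d. samples of law $m_t$, whose $\E\mathcal W_2^2$ is $O(N^{-2/(d+2)})$ by the Fournier–Guillin rate (this is where the exponent $\beta>d+2$ enters: to make $N^{-2/(d+2)} \leq \varepsilon^2$ one needs $N \gtrsim \varepsilon^{-(d+2)}$, and a slightly larger $\beta$ absorbs logarithmic losses and the $\varepsilon<1$ regime); the third term is $\leq C'' e^{-\chi t/2}$, obtained by sending $N\to\infty$ in the marginal version of the $\mathcal W_2$ contraction above (the constant is $N$-free, so it survives the limit). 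Summing, $\E \mathcal W_2^2\po M_t^N,m_\infty\pf \lesssim e^{-\chi t} + N^{-1}$ once $N \geq N_0(1\wedge\varepsilon)^{-\beta}$, and Markov's inequality $\mathbb P(\mathcal W_2 \geq \varepsilon) \leq \varepsilon^{-2}\E\mathcal W_2^2$ finishes the proof.

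The main obstacle I anticipate is making every constant genuinely $N$-independent while keeping the argument honest: in particular, justifying an $N$-uniform log-Sobolev (or at least Talagrand) inequality for $m_\infty^{(N)}$ so that Theorem \ref{TheoNLine}'s entropy decay upgrades to a $\mathcal W_2$ decay, and carrying the convexity-based Grönwall estimate for $\E|(X_i,Y_i)-(\overline X_i,\overline Y_i)|^2$ through the \emph{kinetic} (degenerate, hypoelliptic) structure rather than the elliptic overdamped one — the velocity variable has no direct noise-free contraction, so one must work with a twisted Euclidean norm on $(x,y)$ (a fixed positive-definite quadratic form mixing position and velocity) under which the deterministic drift is contractive thanks to $c_2 < \tfrac12 c_1$. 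A secondary technical point is the non-compactness of $\R^{2d}$ in the Fournier–Guillin estimate, which requires the moment bounds on $m_t$ that are uniform in $t$ (these should follow from the same convexity, e.g. a uniform-in-time second-moment bound via Itô on the twisted norm), and this is precisely why the hypothesis fixes $m_0$ among the data the constants may depend on.
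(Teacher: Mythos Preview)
Your proof has a genuine gap at the step claiming $\E\po|(X_i,Y_i)-(\overline X_i,\overline Y_i)|^2\pf \leq K/N$ \emph{uniformly in $t$} from the parallel coupling and a twisted-norm Gr\"onwall argument. A fixed quadratic form in $(\xi,\eta)=(X-\overline X,\,Y-\overline Y)$ is contractive for the kinetic difference dynamics only when the Hessian of the full potential is well-conditioned; the hypothesis $c_2<\tfrac12 c_1$ pins down the lower eigenvalue of $\na^2 U_N$ but places no constraint on $\|\na^2 V\|_\infty+2\|\na^2 W\|_\infty$, and the cross term $\eta\cdot\na^2 U_N\,\xi$ then destroys any uniform contraction. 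This is exactly why the paper remarks (introduction, just before Theorem~\ref{ThmChaosUniforme}) that the Bolley--Guillin--Malrieu direct-coupling strategy ``does not work'' here. From the parallel coupling the paper only extracts an exponentially growing bound $Ke^{bt}$ (Proposition~\ref{PropChaosGrossier}); the uniform-in-time statement is Theorem~\ref{ThmChaosUniforme}, proved \emph{after} the Corollary by an indirect combination of short-time chaos and long-time equilibration, and it yields only $K/N^{\alpha}$ with $\alpha=(1+b/\chi)^{-1}\ll 1$, not $K/N$.

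The paper's own proof sidesteps time-$t$ chaos altogether. It couples $m_t^{(N)}$ optimally not to $m_t^{\otimes N}$ but directly to $m_\infty^{\otimes N}$, so that the i.i.d.\ empirical-measure fluctuation involves samples of $m_\infty$ (handled by the Bolley--Guillin--Villani concentration inequality, available since $m_\infty$ satisfies log-Sobolev hence $T_2$), while the coupling cost splits via the triangle inequality as $\mathcal W_2^2\po m_t^{(N)},m_\infty^{(N)}\pf+\mathcal W_2^2\po m_\infty^{(N)},m_\infty^{\otimes N}\pf$. The first piece is $\leq KNe^{-\chi t}$ by Theorem~\ref{TheoNLine} and the $N$-uniform Talagrand inequality --- note the factor $N$, coming from $\mathcal H\po m_0^{\otimes N}\,|\,m_\infty^{(N)}\pf\sim N$ (Lemma~\ref{LemPropMalrieu}), which your write-up also drops; the second piece is $\leq K$ uniformly in $N$ by Malrieu's equilibrium chaos estimate (Lemma~\ref{LemChaosEquilibre}). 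Dividing by $N$ (Lemma~\ref{LemCoupleEmpirique}) and applying Markov then gives the stated bound, with existence and uniqueness of $m_\infty$ taken from Lemma~\ref{LemChaosEquilibre} rather than constructed as a limit.
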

(We emphasize that throughout this work $|\cdot|$ always stands for the usual Euclidean distance, and not a renormalization of it, as in some other works on propagation of chaos.)

As shown in \cite[Proposition 2.1]{BolleyGuillinVillani}, such a result yields confidence intervals with respect to the uniform metric for a numerical approximation of $m_\infty$ by $M_t^N \ast \xi$ where $\xi$ is a smooth kernel. Since $m_\infty$ turns to be the tensor product of an explicit Gaussian distribution in velocity and of the invariant measure of an order one McKean-Vlasov equation (such as considered in \cite{BolleyGuillinVillani,Malrieu2001}) in position, there are thus two possibilities for its approximation, and we may wonder if one is better than the other.  This question already arises to sample a Gibbs law $e^{-U}$ even when there is no interaction and $U$ is explicit: this can be done either with a Fokker-Planck reversible process, or with a kinetic Langevin non-reversible one. The reversible dynamics is simpler but the kinetic Langevin sampler may converge to equilibrium faster. This has been observed numerically in \cite{Lelievre2006},  proven for some toy models in \cite{Gadat2013} and there are some philosophical reasons to believe it, but it is hard to state in general since the rates of convergence obtained for so-called hypocoercive processes are usually not sharp.

From Theorem \ref{TheoNLine}, one can also expect to recover a rate of convergence for the deterministic equation \ref{EqVlasovFP}. This requires explicit estimates for the propagation of chaos that behave nicely with respect to time, which are not so easy to establish. In the close to linear case (and with a small interaction), Bolley, Guillin and Malrieu in \cite{BolleyGuillinMalrieu} are able to use the exterior coercive force to prove uniform in time estimates directly from parallel coupling, namely by considering $Z_N$ and $\overline Z_N$ which respectively solve \eqref{EqSystemparticul} and \eqref{EqParticulNonLinea} driven by the same Brownian motion. In our case, this does not work. However, using the parallel coupling for small times and the coupling of the equilibria for large times, an intertwining between results of propagation of chaos and of large time convergence ultimately yields the two following consequences:

\begin{thm}\label{ThmChaosUniforme}
Under Assumptions \ref{Hypo1}, there exist $\alpha>0$ (depending only on $U,\gamma$ and $\sigma$)  and $K>0$ (depending only on $U,\gamma,\sigma$ and $m_0$) such that for all $N\geq 1, \ t>0$, 
\begin{eqnarray*}
\mathcal{W}_2\po m_t^{(1,N)},m_t \pf & \leq & \frac{K}{N^\alpha}
\end{eqnarray*}
where $m_t^{(1,N)}$ stands for the first $2d$-dimensional marginal of $m_t^{(N)}$, namely for the law of $(X_1,Y_1)$.
\end{thm}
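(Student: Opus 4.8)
The plan is to interpolate between two regimes: a \emph{parallel (synchronous) coupling} that is valid for every time but whose constant grows exponentially in $t$, and the $N$-uniform large-time convergence of $m_t^{(N)}$ towards $m_\infty^{(N)}$ provided by Theorem~\ref{TheoNLine}; balancing the two will force a time horizon of order $\ln N$ and produce the exponent $\alpha$. I would first establish three facts. \emph{(a)} A classical (non-uniform) propagation of chaos bound $\mathcal W_2\big(m_t^{(1,N)},m_t\big)^2\le K_1 e^{\Lambda t}/N$, with $\Lambda$ depending only on $U,\gamma$ and $K_1$ on $U,\gamma,\sigma,m_0$. \emph{(b)} An $N$-uniform Wasserstein decay $\mathcal W_2\big(m_t^{(1,N)},m_\infty^{(1,N)}\big)\le C_1 e^{-\chi t/2}$. \emph{(c)} The exponential convergence of the nonlinear flow $\mathcal W_2(m_t,m_\infty)\le C_2 e^{-\chi t/2}$.

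Granting these, fix $t\ge 0$; by the triangle inequality
\[\mathcal W_2\big(m_t^{(1,N)},m_t\big)\le \mathcal W_2\big(m_t^{(1,N)},m_\infty^{(1,N)}\big)+\mathcal W_2\big(m_\infty^{(1,N)},m_\infty\big)+\mathcal W_2\big(m_\infty,m_t\big).\]
For the middle (equilibrium chaos) term, which does not depend on $t$, I write for every $s\ge 0$
\[\mathcal W_2\big(m_\infty^{(1,N)},m_\infty\big)\le \mathcal W_2\big(m_\infty^{(1,N)},m_s^{(1,N)}\big)+\mathcal W_2\big(m_s^{(1,N)},m_s\big)+\mathcal W_2\big(m_s,m_\infty\big)\le (C_1+C_2)e^{-\chi s/2}+\sqrt{K_1 e^{\Lambda s}/N},\]
and the choice $e^{s}=N^{1/(\chi+\Lambda)}$ gives $\mathcal W_2(m_\infty^{(1,N)},m_\infty)\le C_3 N^{-\alpha}$ with $\alpha:=\chi/\big(2(\chi+\Lambda)\big)$ and $C_3:=C_1+C_2+\sqrt{K_1}$. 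Inserting this back, $\mathcal W_2(m_t^{(1,N)},m_t)\le (C_1+C_2)e^{-\chi t/2}+C_3 N^{-\alpha}$ for all $t$; if $t\ge \tfrac{2\alpha}{\chi}\ln N$ this is at most $(C_1+C_2+C_3)N^{-\alpha}$, while if $t\le\tfrac{2\alpha}{\chi}\ln N$ fact \emph{(a)} gives $\mathcal W_2(m_t^{(1,N)},m_t)\le \sqrt{K_1}\,N^{\alpha\Lambda/\chi-1/2}=\sqrt{K_1}\,N^{-\alpha}$, the last equality being exactly the definition of $\alpha$. This yields the theorem with $K:=\max\{\sqrt{K_1},\,C_1+C_2+C_3\}$ and $\alpha$ depending only on $U,\gamma,\sigma$ (through $\chi$ and $\Lambda$), $K$ additionally on $m_0$.

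For \emph{(a)} I would couple \eqref{EqSystemparticul} and \eqref{EqParticulNonLinea} through the same Brownian motions $B_i$ and the same initial data; since the noise cancels, $\delta_i:=X_i-\overline X_i$, $\eta_i:=Y_i-\overline Y_i$ solve a random first-order linear system with source $\tfrac1N\sum_j\na_x U(\overline X_i,\overline X_j)-\int\na_x U(\overline X_i,u)\,m_t(u,v)\,\dd u\,\dd v$, whose conditional $L^2$-norm is $O(1/N)$ because the $\overline X_j$ are i.i.d.\ of law $m_t$ and, by the uniform moment bounds of Lemma~\ref{LemPropMalrieu}, $\sup_t\int |x|^2\, m_t(\dd x)<\infty$ (the Lipschitz character of $\na_x U$ from Assumption~\ref{Hypo1} is also used here). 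A Gronwall estimate on $E(t):=\sum_i\big(\E|\delta_i|^2+\E|\eta_i|^2\big)$, with $E(0)=0$, gives $E(t)\le K_1 e^{\Lambda t}$, and exchangeability yields $\mathcal W_2(m_t^{(1,N)},m_t)^2\le E(t)/N$. For \emph{(b)}, an exchangeable optimal coupling of $m_t^{(N)}$ and $m_\infty^{(N)}$ gives $\mathcal W_2(m_t^{(1,N)},m_\infty^{(1,N)})^2\le \tfrac1N\mathcal W_2(m_t^{(N)},m_\infty^{(N)})^2$; one then uses a Talagrand inequality $\mathcal W_2(\,\cdot\,,m_\infty^{(N)})^2\le \tfrac2\rho\,\ent{\cdot}{m_\infty^{(N)}}$ together with Theorem~\ref{TheoNLine} and $\ent{m_0^{(N)}}{m_\infty^{(N)}}\le c_0 N$ (Lemma~\ref{LemPropMalrieu}). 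The key point is that $\rho$ does not depend on $N$: $m_\infty^{(N)}$ is the product of a centred Gaussian in the velocities and of the Gibbs measure proportional to $e^{-\frac{2\gamma}{\sigma^2}U_N}$ in the positions, and under Assumption~\ref{Hypo1} the global potential $U_N$ is uniformly convex with bounded Hessian, uniformly in $N$ (the interaction contributes at worst $-c_2\sum_i|u_i|^2$ to the Hessian quadratic form, dominated by the $c_1\sum_i|u_i|^2$ coming from $V$), so Bakry--Émery applies with an $N$-free constant. Finally, \emph{(c)} follows from Corollary~\ref{CorPW2}: by \emph{(a)} the empirical measure $M_t^N$ converges to $m_t$ in $\mathcal W_2$-probability as $N\to\infty$, so letting $N\to\infty$ at $t$ fixed in the inequality of Corollary~\ref{CorPW2} forces $\mathcal W_2(m_t,m_\infty)\le C_2 e^{-\chi t/2}$.

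The main obstacle is conceptual rather than computational: the parallel coupling cannot be iterated over consecutive windows, since each window multiplies the error by $e^{\Lambda T}>1$, so the long-time regime must be routed through the two equilibria $m_\infty^{(N)}$ and $m_\infty$; this is what forces the logarithmic time horizon and the degraded exponent $\alpha<1/2$. The most delicate technical ingredient is the $N$-uniform transport (equivalently, log-Sobolev) inequality for $m_\infty^{(N)}$, where the strict convexity of $V$ (and, at the deeper level of Theorem~\ref{TheoNLine} and Corollary~\ref{CorPW2}, the gap $c_2<\tfrac12 c_1$ of Assumption~\ref{Hypo1}) is essential; the uniform-in-time moment bounds feeding \emph{(a)} and the bookkeeping of the dependence of $K_1,\Lambda,\chi$ on $U,\gamma,\sigma,m_0$ also need some care but are routine given Lemma~\ref{LemPropMalrieu} and Theorem~\ref{TheoNLine}.
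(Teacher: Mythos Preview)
Your proof is correct and follows the same interpolation strategy as the paper: parallel coupling for $t\lesssim\ln N$, routing through the equilibria for $t\gtrsim\ln N$, and balancing to obtain $\alpha=\chi/\big(2(\chi+\Lambda)\big)$. The paper organises the large-time piece slightly differently: it works at the full $N$-particle level via
\[
\mathcal W_2^2\big(m_t^{(1,N)},m_t\big)\ \le\ \frac1N\Big(\mathcal W_2\big(m_t^{(N)},m_\infty^{(N)}\big)+\mathcal W_2\big(m_\infty^{(N)},m_\infty^{\otimes N}\big)+\mathcal W_2\big(m_\infty^{\otimes N},m_t^{\otimes N}\big)\Big)^2
\]
and imports the equilibrium chaos bound $\mathcal W_2^2\big(m_\infty^{(N)},m_\infty^{\otimes N}\big)\le K$ from Malrieu's McKean--Vlasov results (Lemma~\ref{LemChaosEquilibre}), whereas you stay at the marginal level and re-derive $\mathcal W_2\big(m_\infty^{(1,N)},m_\infty\big)\le C_3 N^{-\alpha}$ by the same optimisation in $s$. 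Your route is a touch more self-contained; the paper's borrowed equilibrium bound is sharper (order $N^{-1/2}$) but does not improve the final exponent. Two minor slips: the uniform-in-time moment bound feeding your step~(a) is Lemma~\ref{LemMoments}, not Lemma~\ref{LemPropMalrieu} (which only controls the initial entropy); and your derivation of (c) from Corollary~\ref{CorPW2} needs the intermediate fact $M_t^N\to m_t$ in $\mathcal W_2$-probability, which does follow from the full coupling estimate $\mathbb E|Z_N-\overline Z_N|^2\le K e^{\Lambda t}$ underlying (a) together with the law of large numbers for the i.i.d.\ empirical measure --- the paper packages exactly this step as Proposition~\ref{PropWassersteinNonlineaire}.
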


(And in fact we also get a uniform in time propagation of chaos estimate for the total variation distance, see the last remark of the paper.)






\begin{thm}\label{TheoNonLineaire}
Under Assumptions \ref{Hypo1}, there exist $\chi'>0$ (depending only on $U,\gamma$ and $\sigma$) and  $K >0$ (depending only on $U,\gamma,\sigma$ and $m_0$) such that for all $t\geq 0$,
\begin{eqnarray}
\| m_t - m_\infty \|_1 & \leq &  K  e^{-\chi' t} .
\end{eqnarray}
\end{thm}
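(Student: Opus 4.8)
The plan is to deduce the exponential convergence of $m_t$ to $m_\infty$ by combining the uniform-in-time propagation of chaos (Theorem~\ref{ThmChaosUniforme}), the $N$-uniform entropic decay for the particle system (Theorem~\ref{TheoNLine}), and a triangle-inequality argument that lets $N$ depend on $t$. The key observation is that $\|m_t - m_\infty\|_1$ can be bounded by the sum of three terms: the distance from $m_t$ to the first marginal $m_t^{(1,N)}$, the distance from $m_t^{(1,N)}$ to the first marginal $m_\infty^{(1,N)}$ of the particle equilibrium, and the distance from $m_\infty^{(1,N)}$ to $m_\infty$. The first and third terms are controlled, uniformly in $t$, by propagation of chaos; the third is simply Theorem~\ref{ThmChaosUniforme} in the limit $t\to\infty$ (or applied to the stationary regime, using that $m_\infty^{(N)}$ is the law of $Z_N$ started at equilibrium), giving $\mathcal W_2(m_\infty^{(1,N)},m_\infty)\le K N^{-\alpha}$. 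The middle term is where the large-time decay enters.

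First I would make precise the comparison between the total variation distance $\|m_t-m_\infty\|_1$ and the Wasserstein/entropy quantities that the previous theorems provide. Since Theorems~\ref{TheoNLine} and \ref{ThmChaosUniforme} are stated in relative entropy and in $\mathcal W_2$ respectively, I would use the Csisz\'ar--Kullback--Pinsker inequality to pass from entropy to total variation for the middle term, and a moment bound (available since all laws in play have finite, uniformly bounded second moments under Assumption~\ref{Hypo1}) together with the standard interpolation controlling $\mathcal W_2$ by a combination of total variation and second moments, or conversely bounding $\|\cdot\|_1$ by a function of $\mathcal W_2$ when the densities are regular. Concretely, for the middle term: by tensorization of entropy and the fact that $m_t^{(1,N)}$ and $m_\infty^{(1,N)}$ are marginals of $m_t^{(N)}$ and $m_\infty^{(N)}$, one has $\ent{m_t^{(1,N)}}{m_\infty^{(1,N)}} \le \frac1N \ent{m_t^{(N)}}{m_\infty^{(N)}} \le \frac{C}{N} e^{-\chi t}\ent{m_0^{(N)}}{m_\infty^{(N)}}$, and by Lemma~\ref{LemPropMalrieu} the initial entropy is $O(N)$, so the right-hand side is $\le C' e^{-\chi t}$ uniformly in $N$; Pinsker then gives $\|m_t^{(1,N)}-m_\infty^{(1,N)}\|_1 \le C'' e^{-\chi t/2}$.

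Assembling the pieces, for every $N$ and $t$,
\begin{eqnarray*}
\|m_t - m_\infty\|_1 &\le& \|m_t - m_t^{(1,N)}\|_1 + \|m_t^{(1,N)} - m_\infty^{(1,N)}\|_1 + \|m_\infty^{(1,N)} - m_\infty\|_1,
\end{eqnarray*}
where the first and third terms are bounded using Theorem~\ref{ThmChaosUniforme} (together with the regularity/moment controls that upgrade $\mathcal W_2$ bounds to $L^1$ bounds on the densities, or by carrying the whole argument in $\mathcal W_2$ and converting only at the end) by $K N^{-\alpha}$, and the middle term by $C'' e^{-\chi t/2}$. Optimizing by choosing $N = N(t)$ of order $e^{\chi t/(2\alpha)}$ (say, $N(t) = \lceil e^{\chi t/(2\alpha)}\rceil$) makes all three terms of the same exponential order, yielding $\|m_t-m_\infty\|_1 \le K' e^{-\chi' t}$ with $\chi' = \chi/2$ up to adjusting constants, which is the claim. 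Uniqueness of $m_\infty$ for \eqref{EqVlasovFP} is already granted by Corollary~\ref{CorPW2}.

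The main obstacle I anticipate is not the scheme itself but the metric bookkeeping: Theorem~\ref{ThmChaosUniforme} delivers a $\mathcal W_2$ bound while the target is stated in $L^1$ (total variation of densities), and these are not comparable without extra input. One clean route is to run the triangle inequality entirely in $\mathcal W_2$ — bounding $\mathcal W_2(m_t,m_\infty)$ by $\mathcal W_2(m_t,m_t^{(1,N)}) + \mathcal W_2(m_t^{(1,N)},m_\infty^{(1,N)}) + \mathcal W_2(m_\infty^{(1,N)},m_\infty)$, using Pinsker plus a uniform second-moment bound (hence a $\mathcal W_2$-vs-$\mathcal W_1$-vs-TV interpolation) for the middle term — and then convert the resulting $\mathcal W_2(m_t,m_\infty)$ decay into an $L^1$ decay at the very end, exploiting the smoothing of the Vlasov--Fokker--Planck semigroup (the density of $m_t$ is smooth and, together with $m_\infty$, enjoys Gaussian-type bounds under Assumption~\ref{Hypo1}) to control $\|m_t-m_\infty\|_1$ by a power of $\mathcal W_2(m_t,m_\infty)$ on a fixed time interval, absorbing the loss into $\chi'$. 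The care needed is to check that all constants genuinely depend only on $U,\gamma,\sigma$ and $m_0$, which follows because every ingredient invoked has that property.
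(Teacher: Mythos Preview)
Your three-term decomposition with $N=N(t)$ is the paper's scheme, but two of the bounds you invoke are not justified.

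First, the inequality $\ent{m_t^{(1,N)}}{m_\infty^{(1,N)}} \le \frac1N \ent{m_t^{(N)}}{m_\infty^{(N)}}$ is false: Csisz\'ar-type subadditivity of entropy under marginalisation requires the \emph{reference} measure to be a product, and $m_\infty^{(N)}$ is not one because of the interaction $W$. Only the data-processing bound $\|m_t^{(1,N)}-m_\infty^{(1,N)}\|_1 \le \|m_t^{(N)}-m_\infty^{(N)}\|_1$ holds, and after Pinsker, Theorem~\ref{TheoNLine} and Lemma~\ref{LemPropMalrieu} this gives $\sqrt{KN}\,e^{-\chi t/2}$, which grows with $N$. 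This is precisely what the paper uses; the $\sqrt N$ factor must then be balanced against the other terms in the optimisation over $N$.

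Second, and more seriously, for $\|m_t - m_t^{(1,N)}\|_1$ (and likewise the equilibrium term) you only have the $\mathcal W_2$ bound from Theorem~\ref{ThmChaosUniforme}, and there is no general inequality dominating total variation by $\mathcal W_2$. Your proposed fixes --- a regularity upgrade, or running everything in $\mathcal W_2$ and converting at the end via hypoelliptic smoothing --- are non-trivial and not supplied; note that the $\mathcal W_2$ decay $\mathcal W_2(m_t,m_\infty)\le Ke^{-\chi t/2}$ is already Proposition~\ref{PropWassersteinNonlineaire}, so the entire content of Theorem~\ref{TheoNonLineaire} is precisely that conversion. The paper instead proves a dedicated total-variation propagation-of-chaos estimate (Lemma~\ref{LemEntropieBorne}): it differentiates $t\mapsto\ent{m_t^{(N)}}{m_t^{\otimes N}}$, bounds the derivative using the $\mathcal W_2$ control from Theorem~\ref{ThmChaosUniforme}, and then applies Csisz\'ar's inequality --- legitimate here since the reference $m_t^{\otimes N}$ \emph{is} a product --- to obtain $\|m_t^{(1,N)}-m_t\|_1 \le K\sqrt t\,N^{-\alpha/4}$. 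The equilibrium term comes directly in total variation from Lemma~\ref{LemChaosEquilibre}. Combining the three bounds and choosing $N\sim e^{2\chi t/(\alpha+2)}$ concludes.
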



Note that uniform in time propagation of chaos cannot hold when the non-linear dynamics admits several equilibria, since $m_t$ will converge in large time to one of those while the invariant law of $m_t^{(N)}$  is unique and should converge as $N\rightarrow\infty$ to a combination of all of these equilibria. Nevertheless we may hope the same kind of arguments to work when $m_\infty^{(N)}$ is replaced by a quasi-stationary distribution (QSD) of the particle system, and the convergence of $t$ and $N$ to infinity is more intricate, namely for a given $N$ we let $t$ be large enough so that $m_t^{(N)}$ converges to its QSD but not so large so that it doesn't leave the catchment area of a particular equilibrium (to which the QSD should converge as $N\rightarrow\infty$). This is the subject of ongoing research.

\bigskip

The rest of the paper is organized as follow:  Theorem \ref{TheoNLine}, Corollary \ref{CorPW2}, Theorem \ref{ThmChaosUniforme} and Theorem \ref{TheoNonLineaire} are respectively proven in Sections \ref{SectionSystemedeParticule}, \ref{SectionIntervalle}, \ref{SectionChaos} and \ref{SectionVFP}. 

\bigskip

\textbf{Notation:} throughout all the paper, from lines to lines, we keep denoting by $K$ different constants as long as they depend only on $U,\gamma,\sigma$ and $m_0$.

\section{The particle system}\label{SectionSystemedeParticule}


Let us recall some known facts whose proofs may be found in \cite{Talay,MonmarcheRecuitHypo}. The SDE \eqref{EqSystemparticul} admits a unique strong solution defined for all times. Denoting by $P_t^N$ its associated semi-group, defined on (say bounded) functions on $\R^{2dN}$ by $P_t^N f(z) = \mathbb E\po f(Z_N(t))\ |\ Z_N(0) = z\pf$ (recall we write $Z_N = \po (X_i,Y_i)\pf_{i\in\cco 1,N\ccf}$), then $P_t^N$ is strongly Feller and fixes the set of smooth functions whose all derivatives grow at most polynomially. The process admits a unique invariant probability measure whose density with respect to the Lebesgue measure is 
\[m_\infty^{(N)}(x,y) \ \propto \  \exp\po -\frac{2\gamma}{\sigma^2}\po U_N(x) + \frac{|y|^2}{2}\pf\pf\]
where the full potential
\[U_N(x) := \frac1{2N}\sum_{i,j=1}^N U(x_i,x_j)\]
is such that the term that depends on $X$ in \eqref{EqSystemparticul}  in $d Y_i$ is exactly
\[\nabla_{x_i} U_N(x) = \frac1N\sum_{j=1}^N \nabla U(x_i,x_j).\]
\begin{lem}\label{LemHessienneUN}
Under Assumption \ref{Hypo1}, for all $N\in\mathbb N$ and all $x,u\in\R^{dN}$, 
\[\po c_1 - 2 c_2\pf |u|^2 \leq u\cdot \na^2 U_N(x) u \leq \po \|\na^2 V\|_\infty+ 2\|\na^2 W\|_\infty \pf |u|^2 . \]
\end{lem}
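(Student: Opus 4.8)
The plan is to exploit the additive structure $U(x,u) = V(x) + V(u) + W(x-u)$ in order to reduce the Hessian of $U_N$ to a sum of single–particle terms plus a sum over pairs, and then to apply the convexity bounds of Assumption \ref{Hypo1} termwise together with one elementary inequality.

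First I would simplify the full potential. Since $\sum_{i,j=1}^N V(x_i) = \sum_{i,j=1}^N V(x_j) = N\sum_{i=1}^N V(x_i)$, the two ``exterior'' contributions to $U_N$ collapse and the factor $\frac1{2N}$ leaves
\[ U_N(x) = \sum_{i=1}^N V(x_i) + \frac{1}{2N}\sum_{i,j=1}^N W(x_i - x_j). \]
Differentiating twice, the single–particle term contributes a block–diagonal Hessian with $i$-th block $\na^2 V(x_i)$, while for fixed $(i,j)$ the function $x\mapsto W(x_i - x_j)$ has a Hessian supported on the block indexed by $i$ and $j$, equal to $\na^2 W(x_i-x_j)$ on the $(i,i)$ and $(j,j)$ diagonal blocks and to $-\na^2 W(x_i-x_j)$ on the $(i,j)$ and $(j,i)$ blocks. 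Hence, for any $u = \po u_1,\dots,u_N\pf\in\R^{dN}$,
\[ u\cdot\na^2 U_N(x) u = \sum_{i=1}^N u_i\cdot\na^2 V(x_i) u_i + \frac{1}{2N}\sum_{i,j=1}^N (u_i - u_j)\cdot\na^2 W(x_i - x_j)(u_i - u_j). \]

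Then I would bound each piece. For the lower bound, strict convexity of $V$ gives $u_i\cdot\na^2 V(x_i) u_i \geq c_1|u_i|^2$, so the first sum is at least $c_1|u|^2$; the hypothesis on $W$ gives $(u_i-u_j)\cdot\na^2 W(x_i-x_j)(u_i-u_j)\geq -c_2|u_i-u_j|^2$, and using $|u_i-u_j|^2\leq 2|u_i|^2+2|u_j|^2$ together with $\sum_{i,j=1}^N(|u_i|^2+|u_j|^2) = 2N|u|^2$ the pair sum is at least $-2c_2|u|^2$, which yields the announced lower bound $\po c_1 - 2c_2\pf|u|^2$. For the upper bound the same manipulations, now with $u_i\cdot\na^2 V(x_i)u_i \leq \|\na^2 V\|_\infty |u_i|^2$ and $(u_i-u_j)\cdot\na^2 W(x_i-x_j)(u_i-u_j)\leq \|\na^2 W\|_\infty|u_i-u_j|^2$, give $\po \|\na^2 V\|_\infty + 2\|\na^2 W\|_\infty\pf|u|^2$; these sup–norms are finite precisely because $V$ and $W$ have bounded derivatives of order $\geq 2$.

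There is no genuine obstacle here: the statement is a direct computation, and the only point that requires care is the bookkeeping of the Hessian block structure of the pairwise term $W(x_i - x_j)$ and of the combinatorial factor $\frac{1}{2N}$ — which is exactly what makes the interaction contribution of order $O(1)$ in $N$ rather than $O(N)$, and hence what makes the subsequent $N$-uniform estimates possible.
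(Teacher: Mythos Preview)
Your proof is correct and follows exactly the paper's approach: the paper simply records the identity
\[
u\cdot\na^2 U_N(x) u = \sum_{i=1}^N u_i\cdot\na^2 V(x_i) u_i + \frac{1}{2N}\sum_{i,j=1}^N (u_i - u_j)\cdot\na^2 W(x_i - x_j)(u_i - u_j)
\]
and declares the bounds a direct consequence, whereas you have additionally spelled out the block-Hessian bookkeeping and the termwise estimates, but the argument is the same.
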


\begin{proof}
This is a direct consequence of 
\begin{eqnarray*}
 u \cdot   \na^2 U_N(x) u  & = & \sum_{i=1}^N u_i \cdot  \na^2 V(x_i) u_i\\
 & &  + \frac{1}{2N} \sum_{i,j=1}^N (u_i - u_j) \cdot  \na^2 W(x_i-x_j)(u_i - u_j).
\end{eqnarray*}
\end{proof}
Recall we say a measure $\mu$ satisfies a log-Sobolev inequality with constant $\eta>0$ if
\begin{eqnarray}\label{EqLogSob}
\forall f>0\ s.t.\ \int f\dd \mu = 1,\hspace{40pt} \int f\ln f\dd \mu  & \leq & \eta \int  \frac{|\na f|^2}{f} \dd \mu.
\end{eqnarray}

\begin{lem}\label{LemSobolevmN}
Under Assumption \ref{Hypo1}, the measure $m_\infty^{(N)}$ satisfies a log-Sobolev inequality with a constant $\eta$ that does not depend on $N$.
\end{lem}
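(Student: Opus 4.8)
The plan is to use the product structure of $m_\infty^{(N)}$ together with the uniform convexity of $U_N$ furnished by Lemma~\ref{LemHessienneUN}, and then to appeal to two dimension-free classical facts: the Bakry--\'Emery criterion and the tensorization property of the log-Sobolev inequality.

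First I would write $m_\infty^{(N)} = \nu_N \otimes \rho_N$ on $\R^{dN}\times\R^{dN}$, where $\nu_N \propto \exp\po -\tfrac{2\gamma}{\sigma^2}U_N\pf$ is the position marginal and $\rho_N \propto \exp\po -\tfrac{\gamma}{\sigma^2}|y|^2\pf$ is the (centered Gaussian) velocity marginal. Since the log-Sobolev inequality \eqref{EqLogSob} tensorizes --- the constant of a product being the maximum of the constants of the factors --- it suffices to bound the log-Sobolev constants of $\nu_N$ and of $\rho_N$ uniformly in $N$. For $\rho_N$ this is immediate: it is $e^{-\Phi}$ with $\na^2\Phi = \tfrac{2\gamma}{\sigma^2}\mathrm{Id}$, so Bakry--\'Emery yields a log-Sobolev constant $\tfrac{\sigma^2}{4\gamma}$, independent of $N$. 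For $\nu_N = e^{-\Phi_N}$ with $\Phi_N = \tfrac{2\gamma}{\sigma^2}U_N$, Lemma~\ref{LemHessienneUN} gives $\na^2\Phi_N \geq \tfrac{2\gamma}{\sigma^2}(c_1-2c_2)\mathrm{Id}$ with $c_1-2c_2>0$ by Assumption~\ref{Hypo1}; this positive lower bound does not depend on $N$, so Bakry--\'Emery again provides a log-Sobolev constant $\tfrac{\sigma^2}{4\gamma(c_1-2c_2)}$, uniform in $N$. (That $\nu_N$ is a genuine probability measure follows from the same Hessian lower bound, which forces at least quadratic growth of $U_N$ up to a linear term.)

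Putting the pieces together, $m_\infty^{(N)}$ will satisfy \eqref{EqLogSob} with $\eta = \tfrac{\sigma^2}{4\gamma}\max\po 1,(c_1-2c_2)^{-1}\pf$, which depends only on $\sigma,\gamma$ and the convexity constants of $V$ and $W$, and not on $N$ --- which is precisely the content of the statement. There is no real analytic difficulty here: the only thing to watch will be the bookkeeping of numerical constants in the normalization \eqref{EqLogSob} (the factor $\tfrac12$ in passing from the usual ``$f^2$'' form of Bakry--\'Emery and of tensorization to the ``$f$'' form used in \eqref{EqLogSob}, and the fact that tensorization returns the maximum rather than the sum of the two constants). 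Everything else is just the uniform-in-$N$ convexity of $U_N$ recorded in Lemma~\ref{LemHessienneUN} meeting the dimension-freeness of the two ingredients.
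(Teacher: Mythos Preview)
Your proof is correct and follows essentially the same route as the paper: Bakry--\'Emery combined with the uniform-in-$N$ convexity of Lemma~\ref{LemHessienneUN}. The only difference is cosmetic --- the paper applies the criterion directly to the full potential $F(x,y)=\tfrac{2\gamma}{\sigma^2}\po U_N(x)+\tfrac12|y|^2\pf$ on $\R^{2dN}$ (whose Hessian is block diagonal with the same two lower bounds you found) rather than splitting into position and velocity marginals and tensorizing, and it arrives at the identical constant $\eta=\tfrac{\sigma^2}{4\gamma}\max\po 1,(c_1-2c_2)^{-1}\pf$.
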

\begin{proof}
For $z\in\R^{2dN}$, let $F(z) = \frac{2\gamma}{\sigma^2}\po U_N(x) + \frac{|y|^2}{2}\pf$. For all $z,u\in\R^{2dN}$ with $|u|=1$,
\begin{eqnarray*}
u\cdot \na^2 F(z) u &\geq& \frac{2\gamma}{\sigma^2}\min \po c_1-2c_2,1\pf \ := \frac{1}{2\eta} \ >\ 0.
\end{eqnarray*}
Since $m_\infty^{(N)}  \propto e^{-F}$ is the invariant measure of the semi-group with generator $-\na F\cdot \na + \Delta$, the Bakry-Emery curvature criterion (see \cite{BolleyGentil}) concludes. 
\end{proof}
Consider the generator
\begin{eqnarray}\label{EqDefiLN}
L_N   &=& -y\cdot\na_x   + \po\na U_N(x)-\gamma y\pf \cdot \na_y   + \frac{\sigma^2}{2} \Delta_{y}.
\end{eqnarray}
 Then $h_t^{(N)} = \frac{ m_t^{(N)}}{  m_\infty^{(N)}}$, the density of the law of the particle system \eqref{EqSystemparticul} with respect to its equilibrium, solves $\partial_t h_t^{(N)} = L_N h_t^{(N)}$. This is a linear kinetic Fokker-Planck equation, for which convergence to equilibrium has been proven by many ways.  All we need to check is that the explicit estimates we obtain do not depend on $N$. For instance we can use the following

\begin{thm}[from Theorem 10 of  \cite{MonmarcheGamma}]\label{Theo10Gamma}
Consider a diffusion generator $L$  on H\"ormander form
\[L = B_0 + \sum_{i=1}^d B_i^2\]
where the $B_j$'s are derivation operators. Suppose there exist $N_c\in \mathbb N$ and $\lambda,\Lambda,m,\rho,K>0$ such that for $i\in\llbracket 0,N_c+1\rrbracket$ there exist smooth derivation operators $C_i$ and $R_i$ and a scalar field $Z_i$  satisfying:
\begin{enumerate}[(i)]
\item $C_{N_c+1} = 0$,\ and \  $[B_0,C_i] = Z_{i+1} C_{i+1} + R_{i+1}$ \  for all $i\in\llbracket 0,N_c\rrbracket$, where $[A,B]=AB-BA$ stands for the Poisson bracket of two operators,
\item $[B_j,C_i] = 0$ \ for all $i\in\llbracket 0,N_c\rrbracket$, $j\in \llbracket 1,d\rrbracket$, 
\item  $\lambda \leq  Z_i  \leq \Lambda$ \ for all $i\in\llbracket 0,N_c\rrbracket$,
\item $|C_0 f|^2 \leq m \underset{j\geq 1}\sum |B_j f|^2$ and $|R_i f|^2 \leq m \underset{j<i}\sum |C_j f|^2$ for all $i\in\llbracket 0,N_c+1\rrbracket$ and smooth Lipschitz $f$.
\item $\underset{i\geq 0}\sum | C_i f|^2 \geq \rho |\na f|^2$.
\end{enumerate}
Suppose moreover that there exists a probability measure $\mu$ which is invariant for $e^{tL}$ and satisfies a log-Sobolev inequality with constant $\eta$.

\bigskip

Then   for all $t>0$ and for all $f>0$ with $\int f\dd \mu = 1$,
\begin{eqnarray}\label{EqTheorCoerc}
 \int \po e^{tL}f\pf\ln \po e^{tL}f\pf\dd \mu & \leq &  e^{-\kappa t(1-e^{-t})^{2N_c}}  \int f\ln f\dd \mu
\end{eqnarray}
with
\begin{eqnarray*}
\kappa &=& \frac{\rho}{\eta} \po \frac{100}{\lambda}\po N_c^2 + \frac{\Lambda^2}{\lambda}+m\pf \pf^{-20 N_c^2}.
\end{eqnarray*}
\end{thm}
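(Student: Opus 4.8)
\emph{Proof sketch.} Write $f_t=e^{tL}f$, $H(t)=\int f_t\ln f_t\,\dd\mu$, and for a derivation $C$ set $G_C(f)=\int |Cf|^2/f\,\dd\mu$. Under the normalisation conventions implicit in writing $L$ in H\"ormander form relative to $\mu$ (so that $\mu$ is invariant and the $B_j$ play the role of the symmetric directions), the classical Bakry--\'Emery computation gives
\[ H'(t)\ =\ -\sum_{j=1}^d G_{B_j}(f_t)\ =:\ -D(f_t), \qquad \tfrac{\dd}{\dd t}G_C(f_t)\ =\ -2\sum_{j=1}^d\int\frac{|B_jCf_t|^2}{f_t}\,\dd\mu\ +\ 2\int\frac{(Cf_t)\,[C,L]f_t}{f_t}\,\dd\mu . \]
The dissipation $D$ only controls the directions $B_j$, so it cannot be compared to the full Fisher information, hence not to $H$ through the log-Sobolev inequality; this is the hypocoercive difficulty. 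The plan is to repair it with a time-dependent modification of $H$ built from the chain of operators $C_0,\dots,C_{N_c}$:
\[ \Phi_t(f)\ =\ H(f)\ +\ \sum_{i=0}^{N_c} a_i(t)\,G_{C_i}(f)\ +\ \sum_{i=0}^{N_c-1} b_i(t)\int\frac{(C_if)(C_{i+1}f)}{f}\,\dd\mu, \]
where the weights will satisfy $0\le b_i(t)\le\tfrac12\sqrt{a_i(t)a_{i+1}(t)}$ (so the cross terms are dominated by the quadratic ones and $\Phi_t\ge H\ge0$) and will vanish at $t=0$, so that $\Phi_0(f)=H(f)$.

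Differentiating $\Phi_t(f_t)$: by assumption (ii), $[C_i,B_j]=0$, hence $[C_i,B_j^2]=0$, so the only commutator with $L$ is $[C_i,L]=[C_i,B_0]=-(Z_{i+1}C_{i+1}+R_{i+1})$, and the cascade terminates since $C_{N_c+1}=0$. Plugging this into the formula above, $\tfrac{\dd}{\dd t}G_{C_i}(f_t)$ yields a nonpositive diffusion term, a coupling term bounded by $2\Lambda\sqrt{G_{C_i}G_{C_{i+1}}}$ via $Z_{i+1}\le\Lambda$ from (iii), and a remainder bounded through (iv) by $G_{C_i}(f_t)+m\sum_{j\le i}G_{C_j}(f_t)$. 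The decisive gain comes from the cross terms: $\tfrac{\dd}{\dd t}\int (C_if_t)(C_{i+1}f_t)/f_t\,\dd\mu$ contains the genuinely negative contribution $-\int Z_{i+1}|C_{i+1}f_t|^2/f_t\,\dd\mu\le-\lambda\,G_{C_{i+1}}(f_t)$ arising from the $Z_{i+1}C_{i+1}$ part of $[C_i,B_0]$, together with cross-diffusion terms that are absorbed by the $-2a_i\sum_j\int|B_jC_if_t|^2/f_t\,\dd\mu$ terms as soon as $b_i\le a_{i+1}$.

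Collecting all contributions, $\tfrac{\dd}{\dd t}\Phi_t(f_t)$ is a linear combination of $D(f_t)$ and of the $G_{C_i}(f_t)$; the coefficient of $G_{C_0}$ is made negative through $-D(f_t)\le-\tfrac1m G_{C_0}(f_t)$ (again by (iv)), while for $i\ge1$ the coefficient of $G_{C_i}$ is made negative by the $-\lambda b_{i-1}G_{C_i}$ term above, once every positive contribution — the moving-weight term $a_i'(t)G_{C_i}$, the coupling remainder $\sim a_i^2\Lambda^2/(\lambda b_i)\,G_{C_i}$, and the $R$-errors $\sim m\sum_{k\ge i}a_k\,G_{C_i}$ — has been swallowed. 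A Young-inequality bookkeeping shows this is achievable by taking $a_i(t)=\rho_i\,u(t)^{p_i}$ with $u(t)=1-e^{-t}$, $b_i(t)\asymp\sqrt{a_i(t)a_{i+1}(t)}$, the constants $\rho_i$ decreasing through $N_c$ nested scales governed by the ratio $\Lambda^2/\lambda^2$ (this is what forces an exponent growing like $N_c^2$ in $\kappa$), and the exponents growing by two, $p_i=p_0+2i$ with $p_0>0$ small, so that near $t=0$ the moving-weight error $a_i'(t)\sim u^{p_i-1}$ is dominated by $\lambda b_{i-1}(t)\sim u^{(p_{i-1}+p_i)/2}=u^{p_i-1}$. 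The outcome is $\tfrac{\dd}{\dd t}\Phi_t(f_t)\le 0$ throughout and
\[ \tfrac{\dd}{\dd t}\Phi_t(f_t)\ \le\ -\,c\,u(t)^{2N_c}\!\!\sum_{i=0}^{N_c}G_{C_i}(f_t), \]
with $c$ an explicit function of $\lambda,\Lambda,m,N_c$. Finally, (v) gives $\sum_i G_{C_i}(f_t)\ge\rho\int|\na f_t|^2/f_t\,\dd\mu\ge(\rho/\eta)H(f_t)$ by the log-Sobolev inequality, and also $\sum_i G_{C_i}(f_t)\ge(\max_i a_i(t))^{-1}\sum_i a_i(t)G_{C_i}(f_t)$, so that $\sum_i G_{C_i}(f_t)\gtrsim(\rho/\eta)\Phi_t(f_t)$ once $\rho_0$ is small. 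Hence $\tfrac{\dd}{\dd t}\Phi_t(f_t)\le-\widetilde\kappa\,u(t)^{2N_c}\Phi_t(f_t)$ for an explicit $\widetilde\kappa$, and Gr\"onwall gives $\Phi_t(f_t)\le\exp\bigl(-\widetilde\kappa\int_0^tu(s)^{2N_c}\dd s\bigr)\Phi_0(f)$; since $s\le e^s-1$ implies $\int_0^tu(s)^{2N_c}\dd s\ge\tfrac{1}{2N_c+1}\,t\,u(t)^{2N_c}$, and $H(f_t)\le\Phi_t(f_t)$, $\Phi_0(f)=H(f)$, this is exactly \eqref{EqTheorCoerc} with $\kappa=\widetilde\kappa/(2N_c+1)$.

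The main obstacle is the bookkeeping of the penultimate step: exhibiting a single admissible family $\{a_i(t),b_i(t)\}$ — the constants $\rho_i$, decreasing through $N_c$ nested scales, and the exponents $p_i$, which tie the behaviour at the origin to the $(1-e^{-t})^{2N_c}$ factor — for which every coupling and every error term in $\tfrac{\dd}{\dd t}\Phi_t$ is absorbed \emph{simultaneously}, and then tracking the dependence of all the absorbing constants on $\lambda,\Lambda,m,\rho,\eta,N_c$ to extract the explicit value of $\kappa$ (the exponent $20N_c^2$ being the signature of the nested scales). The commutator identities, the Bakry--\'Emery integrations by parts, the log-Sobolev step and the final Gr\"onwall are all routine.
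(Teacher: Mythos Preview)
The paper does not prove this theorem: it is quoted verbatim as ``from Theorem~10 of \cite{MonmarcheGamma}'' and used as a black box to deduce Theorem~\ref{TheoNLine}, so there is no proof in the paper to compare your attempt against. Your sketch is a faithful outline of the modified-entropy (Villani-type) hypocoercivity argument that the cited reference indeed carries out --- the time-dependent Lyapunov functional $\Phi_t$ built from the chain $C_0,\dots,C_{N_c}$, the commutator cascade producing the crucial negative $-\lambda b_{i-1}G_{C_{i+1}}$ term, the nested choice of weights $a_i(t)\sim(1-e^{-t})^{p_i}$, and the closure via log-Sobolev --- and you correctly identify the bookkeeping of the absorbing constants as the only non-routine step and the source of the $20N_c^2$ exponent. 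One small slip: the inequality $\int_0^t u(s)^{2N_c}\dd s\ge \frac{1}{2N_c+1}\,t\,u(t)^{2N_c}$ does not follow from $s\le e^s-1$ alone (that gives $\int_0^t u^{2N_c}\ge \frac{u(t)^{2N_c+1}}{2N_c+1}$, not the version with the factor $t$); one rather uses that $u$ is concave with $u(0)=0$, so $u(s)\ge (s/t)u(t)$ on $[0,t]$.
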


\begin{proof}[Proof of Theorem \ref{TheoNLine}]
The generator \eqref{EqDefiLN} is on H\"ormander form 
\[B_0 + \sum_{i=1}^N \sum_{j=1}^d B_{i,j} \]
with, writing $y_i = \po y_i^{(1)},\dots,y_i^{(d)}\pf\in\mathbb R^d$,
\begin{eqnarray*}
B_0 &=& -y\cdot\na_x   + \po\na U_N(x)-\gamma y\pf \cdot \na_y   \\
B_{i,j} &=& \frac{\sigma}{\sqrt 2} \partial_{y_i^{(j)}}.
\end{eqnarray*}
 Since
 \[ [B_0,\na_y] =[L_N,\na_y] = \na_x + \gamma\na_y, \hspace{35pt}[B_0,\na_x] =[L_N,\na_x] = - \na_x^2 U_N \na_y,\]
 Theorem \ref{Theo10Gamma} applies with 
 \begin{eqnarray*}
C_0 = \na_y,& \hspace{20pt} &C_1 = \na_x,\\
R_1 = \gamma \nabla_y, & \hspace{20pt} &R_2 = - \na^2 U_N \na_y,
 \end{eqnarray*}
  \[Z_1=Z_2=N_c=\lambda=\Lambda=\rho=1,\]
   \[m= \frac2{\sigma^2 } +  \gamma^2 +\po\|\na^2 V\|_\infty+2\|\na^2 W\|_\infty\pf^2\] and $\eta$ given by Lemma \ref{LemSobolevmN}.
 \end{proof}

Remark that instead of \cite[Theorem 10]{MonmarcheGamma}, we could have referred to the work of Villani, which is anterior, to get the same result. Indeed, combining the Theorems 28 and A.15 of \cite{Villani2009} yields a similar hypocoercive convergence in the relative entropy sense. Nevertheless, this would have required a thorough examination of the somewhat tedious computations in the  proofs of these theorems, in order to  explicit the constants $C$ and $\chi$ and  check that they do not depend on $N$. Besides, since we did not conduct such an examination, it is not clear whether the computations in \cite{Villani2009}, which are known not to be sharp, can indeed  give constants $C$ and $\chi$ which are uniform. Yet, this is the crucial argument on which relies all the rest of the present work.

Remark also that, on the contrary, we could not use the method of Dolbeault, Mouhot and Schmeiser (see the seminal work \cite{DMS2009}) since, rather than in the relative entropy sense, the latter states a convergence in the $L^2$ distance sense, which behaves badly with respect to tensorization. More precisely, $\mathcal H(m_1\ | \ m_2)$ would be replaced by
\begin{eqnarray*}
\int \po \frac{m_1}{m_2} - 1\pf^2 m_2 &=& \int \po \frac{m_1}{m_2} \pf^2 m_2 - 1 
\end{eqnarray*}
and thus, for independent variables, $\mathcal H\po m_1^{\otimes N}\ |\ m_2^{\otimes N}\pf$ would be replaced by
\[ \po \int \po \frac{m_1}{m_2} \pf^2 m_2\pf^N - 1 .\]

\bigskip

In our case, the explicit bound on the rate given by Theorem \ref{Theo10Gamma} reads
\begin{eqnarray}\label{EqborneChi}
\chi & = & \frac{2 \min\po c_1-2c_2,1\pf}{\sigma^2 \po 100\po 2+ \frac2{\sigma^2 } +  \gamma^2 +\po\|\na^2 V\|_\infty+2\|\na^2 W\|_\infty\pf^2\pf \pf^{20 }},
\end{eqnarray}
which is quite rough. When the potentials are quadratic, we can compare this to the real rate, which is known. Suppose $V(x) = \frac a2|x|^2$ and $W(x)=\frac b2|x|^2$ so that  $Z=(X,Y)$ is a generalized Ornstein-Uhlenbeck process that solves
\begin{eqnarray*}
\dd Z &=& -A Z + \sigma \begin{pmatrix}
0\\ I
\end{pmatrix} \dd B
\end{eqnarray*}
with, denoting by $\pi = \frac1N\begin{pmatrix}
1 & \dots & 1\\
\vdots & \ddots & \vdots\\
1 & \dots & 1
\end{pmatrix}$ the orthogonal projector on $\begin{pmatrix}
1 \\
\vdots \\
1  
\end{pmatrix} $,
\[A \ = \ \begin{pmatrix}
0 & -I\\
a I + b\po I-\pi \pf & \gamma I
\end{pmatrix}.\]
As proved in \cite[Corollary 12]{MonmarcheGamma}, the rate of convergence in the entropic sense of $Z$ to its equilibrium is exactly the spectral gap of $A$. Note that $a I + b\po I-\pi \pf $ is diagonalizable in an orthonormal basis $(u_k)_{k\in\co 1, dN\cf}$, with the corresponding eigenvalues $\lambda_k$ being  either $a+b$ or $a$. Then a vector of the form $(u_k,r u_k)$ for some $r\in\mathbb C$ is an eigenvector of $A$ if only if
\[r \ = \ -\frac{\gamma}{2} \pm \sqrt{\po\frac{\gamma}{2}\pf^2 - \lambda_k},\]
in which case the corresponding eigenvalue is $-r$. It means that, in this quadratic case, Theorem \ref{TheoNLine} holds in fact with
\[
\begin{array}{ccll}
\chi &=& \frac\gamma2 & if \ \po \frac\gamma2\pf^2 <  \min\po a,a+b\pf\\ 
&=&  \frac\gamma2 -\sqrt{\po \frac\gamma2\pf^2 - \min\po a,a+b\pf} & if \ \po \frac\gamma2\pf^2 >  \min\po a,a+b\pf
\end{array}
\]
(in the case $ \gamma^2=  4\min\po a,a+b\pf$,    an additional polynomial factor should be added, see \cite{MonmarcheGamma}). On the other hand, the bound \eqref{EqborneChi} here only reads
\begin{eqnarray*}
\chi & \geq & \frac{2 \min\po a+b,1\pf}{\sigma^2 \po 100\po 2+ \frac2{\sigma^2 } +  \gamma^2 +\po a + 2|b|\pf^2\pf \pf^{20 }}.
\end{eqnarray*}
For instance, if $\sigma=a=b=\gamma=1$, we should have $\chi = \frac12$,
 while from  \eqref{EqborneChi} we only get $\chi \geq 2 \times 10^{-63}$ (again, the computations that lead to \eqref{EqborneChi} were rough but, should we carefully refine them step by step, we would  still miss the target by many orders of magnitude).

\section{Confidence intervals}\label{SectionIntervalle}

From the work of Malrieu on the McKean-Vlasov equation, we obtain the two following Lemmas:

\begin{lem}\label{LemChaosEquilibre}
Under Assumption \ref{Hypo1}, the Vlasov-Fokker-Planck equation \eqref{EqVlasovFP} admits a unique equilibrium $m_{\infty}$ (with normalized mass) which satisfies a log-Sobolev inequality with constant $\eta$ (given by Lemma \ref{LemSobolevmN}), and there exists $K>0$ that depends only on $U,\gamma,\sigma$ and $m_0$ such that for all $N$
\begin{eqnarray*}
\W_2^2\po m_\infty^{(N)},m_\infty^{\otimes N}\pf &\leq& K\\
\| m^{(1,N)}_\infty - m_\infty \|_1 & \leq & \frac{K}{\sqrt N}.
\end{eqnarray*}

\end{lem}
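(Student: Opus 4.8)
The plan is to follow Malrieu's strategy for the McKean--Vlasov equation, adapting it to the present kinetic setting. First I would establish existence and uniqueness of the equilibrium $m_\infty$ of \eqref{EqVlasovFP}. The natural guess is that $m_\infty$ has the product form $m_\infty(x,y) \propto \exp\!\po -\tfrac{2\gamma}{\sigma^2}\po |y|^2/2 + V(x) + (W\ast \rho_\infty)(x) + \text{const}\pf\pf$ where $\rho_\infty$ is the position marginal, so that $m_\infty$ solves a fixed-point equation $\rho_\infty \propto \exp\!\po -\tfrac{2\gamma}{\sigma^2}\po V + W\ast\rho_\infty\pf\pf$. Uniqueness of this fixed point should follow from the convexity assumptions in Assumption \ref{Hypo1}: since $V$ is $c_1$-convex and $W$ has Hessian bounded below by $-c_2$ with $c_2 < c_1/2$, the relevant free-energy functional is displacement-convex, which forces uniqueness. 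The log-Sobolev constant $\eta$ for $m_\infty$ then comes from Bakry--\'Emery exactly as in Lemma \ref{LemSobolevmN}: the effective potential has Hessian bounded below by $\min(c_1 - c_2, \gamma) \cdot (\text{const})$, uniformly, and in any case the tensorized measure $m_\infty^{\otimes N}$ inherits the same constant by the tensorization property of log-Sobolev.

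For the quantitative estimates I would use a coupling/propagation-of-chaos argument at equilibrium. The key observation is that $m_\infty^{(N)}$ is the invariant law of the particle system \eqref{EqSystemparticul}, and $m_\infty^{\otimes N}$ is the invariant law of $N$ independent copies of the nonlinear process \eqref{EqParticulNonLinea} (with $m_t \equiv m_\infty$). Run both systems from stationarity, driven by the same Brownian motions (parallel coupling). Writing $D_t = \tfrac1N\sum_i |X_i - \overline X_i|^2 + |Y_i - \overline Y_i|^2$ and using Assumption \ref{Hypo1} together with Lemma \ref{LemHessienneUN}, the drift of $D_t$ has a contractive part (from the $c_1 - 2c_2 > 0$ convexity, via a suitable modified quadratic form in $(x,y)$ adapted to the hypocoercive structure, i.e. working with $a|X-\overline X|^2 + 2b(X-\overline X)\cdot(Y-\overline Y) + c|Y-\overline Y|^2$ for well-chosen $a,b,c$) and a source term of order $1/N$ coming from the difference between the empirical force $\tfrac1N\sum_j \na_x U(X_i,X_j)$ and the averaged force $\int \na_x U(\overline X_i, u)\,m_\infty(\dd u)$; this source term is $O(1/N)$ in $L^2$ because the $\overline X_j$ are i.i.d. with law the position marginal of $m_\infty$ and have finite second moment, so a standard variance computation gives the bound. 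Since the system is stationary, $\E(D_t)$ is constant in time, so the contraction inequality $\tfrac{d}{dt}\E(D_t) \leq -\text{const}\cdot\E(D_t) + K/N$ forces $\E(D_t) \leq K/N$, and this bounds $\W_2^2(m_\infty^{(N)}, m_\infty^{\otimes N})$. Wait --- the stated bound is $\W_2^2 \leq K$, not $K/N$; the sharper $K/N$ rate requires the first marginal, and indeed the second inequality of the lemma, $\|m_\infty^{(1,N)} - m_\infty\|_1 \leq K/\sqrt N$, is the real content: from $\E|X_1 - \overline X_1|^2 + |Y_1-\overline Y_1|^2 \leq K/N$ one gets $\W_2(m_\infty^{(1,N)}, m_\infty) \leq \sqrt{K/N}$, and then converting Wasserstein control into total variation control uses the log-Sobolev inequality for $m_\infty$ (via a Talagrand-type or HWI-type argument, or by a regularization argument exploiting smoothness of the densities under Assumption \ref{Hypo1}) to upgrade $\W_2 \lesssim N^{-1/2}$ to $\|\cdot\|_1 \lesssim N^{-1/2}$.

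The main obstacle I expect is the hypocoercive coupling estimate: unlike the overdamped McKean--Vlasov case treated by Malrieu, the kinetic dynamics is not contractive in the naive Euclidean distance on $(x,y)$, because the noise acts only on the velocity and the friction $-\gamma y$ does not directly dissipate the position. One must therefore work with a twisted quadratic Lyapunov functional on $(X-\overline X, Y-\overline Y)$ — equivalently a change of metric à la Villani/Dolbeault--Mouhot--Schmeiser at the level of the coupling — and check that, under $c_1 - 2c_2 > 0$ and for suitable parameters depending on $\gamma, \sigma$ and the Hessian bounds from Lemma \ref{LemHessienneUN}, this twisted functional is both equivalent to $|X-\overline X|^2 + |Y-\overline Y|^2$ and genuinely contracted by the coupled flow, with a rate and constants uniform in $N$. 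Once this is set up, the $1/N$ source term and the passage to the one-particle marginal are routine, and the total-variation bound follows from the log-Sobolev inequality as indicated.
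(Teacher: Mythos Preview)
Your plan is substantially harder than necessary and misses the key simplification the paper exploits. The central observation is that at equilibrium the velocity marginals of $m_\infty^{(N)}$ and of $m_\infty^{\otimes N}$ are \emph{identical}: both equal the Gaussian $G^{\otimes N}$ with variance $\sigma^2/(2\gamma)$, and in both measures position and velocity are independent. Hence the $\W_2$ estimate and the total variation estimate reduce entirely to the position marginals, which are precisely the equilibria of the overdamped McKean--Vlasov particle system and of the nonlinear McKean--Vlasov equation. The paper therefore simply invokes Malrieu's results (\cite{Malrieu2001}, Theorem~1.2 and Proposition~3.13) for that overdamped setting, letting $t\to\infty$ there and using Theorem~\ref{TheoNLine} with Talagrand/Pinsker to justify the limit. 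No hypocoercive coupling is needed at all, and the obstacle you correctly anticipate simply evaporates.

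Your direct route via a twisted-metric parallel coupling of the kinetic systems could plausibly be carried out for the $\W_2$ bound (the uniform lower Hessian bound of Lemma~\ref{LemHessienneUN} is exactly what one needs for $N$-independent constants in the modified quadratic form), but note a slip: under parallel coupling started from stationary marginals the \emph{joint} law is not stationary, so $\mathbb E(D_t)$ is not constant; the differential inequality still yields the conclusion by letting $t\to\infty$ and using $\W_2^2\le \mathbb E(D_t)$ for every $t$. More seriously, your passage from $\W_2(m_\infty^{(1,N)},m_\infty)\lesssim N^{-1/2}$ to $\|m_\infty^{(1,N)}-m_\infty\|_1\lesssim N^{-1/2}$ is a genuine gap: neither log-Sobolev, Talagrand, nor HWI runs in that direction without additional uniform control (for instance on the Fisher information of $m_\infty^{(1,N)}$ relative to $m_\infty$), and no such control has been established. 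In the paper the total variation bound comes directly from Malrieu's entropy-based propagation-of-chaos estimate on the position marginals, not by upgrading a Wasserstein bound.
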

\begin{proof}
According to \cite{Tugaut2015}, a measure $\mu$ is an equilibrium of \eqref{EqVlasovFP} if and only if $\mu(\dd x,\dd y) = \nu(x)\dd x \otimes G(\dd y)$ where $G$ is the Gaussian distribution with variance $\frac{\sigma^2}{2\gamma}$ and $\nu$ is an equilibrium of the McKean-Vlasov equation
\begin{eqnarray*}
\partial_t \rho_t &=& \na \cdot\po \na \rho + \rho_t\int \na U(x,u) \rho(u)\dd u\pf.
\end{eqnarray*} 
Under the convexity condition of Assumption \ref{Hypo1}, according to \cite{Malrieu2001} such an equilibrium $\nu$ is unique and satisfies the same log-Sobolev inequality as does the invariant measure of the corresponding particle system uniformly in $N\geq 1$. Thus by tensorization $m_\infty$ satisfies a log-Sobolev inequality.

Since the second $dN$-dimensional marginals of $m_\infty^{(N)}$ and $m_\infty^{\otimes N}$ are equal, equal to $G^{\otimes N}$, the $\mathcal W_2$ and total variation bounds only concern the first marginals. The $\mathcal W_2$ (resp. total variation) bound   is obtained by letting $t$ go to infinity in \cite[Theorem 1.2]{Malrieu2001} (resp. \cite[Proposition 3.13]{Malrieu2001}) when the initial law is $m_\infty$, which states that $\W_2^2\po m_t^{(N)},m_\infty^{\otimes N}\pf$ is bounded uniformly in $N$ and $t$ (and similarly for the total variation distance). The convergence of $m_t^{(N)}$ to its equilibrium, from Theorem \ref{TheoNLine} together with the Talagrand and Pinsker's Inequalities, concludes.
 
\end{proof}

\begin{lem}\label{LemPropMalrieu}
Under Assumption \ref{Hypo1}, there exists $K$ depending only on $U,\gamma,\sigma$ and $m_0$ such that 
\begin{eqnarray*}
\ent{m_0^{\otimes N}}{m_\infty^{(N)}} & \leq & KN.
\end{eqnarray*}
\end{lem}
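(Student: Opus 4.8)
The plan is to expand the relative entropy explicitly and bound each resulting term linearly in $N$. Write $\beta=2\gamma/\sigma^2$, so that $m_\infty^{(N)}(x,y)=Z_N^{-1}\exp\!\po-\beta U_N(x)-\tfrac\beta2|y|^2\pf$, where the normalising constant factorises as $Z_N=\po\int_{\R^{dN}}e^{-\beta U_N}\,\dd x\pf(2\pi/\beta)^{dN/2}$. Since $m_0^{\otimes N}$ has a density while $m_\infty^{(N)}$ is everywhere positive, the relative entropy is finite as soon as the integrals below are, and expanding the logarithm gives the exact identity
\[
\ent{m_0^{\otimes N}}{m_\infty^{(N)}}
= N\!\int m_0\ln m_0
\;+\;\ln Z_N
\;+\;\beta\,\E_{m_0^{\otimes N}}\!\co U_N\cf
\;+\;\frac{\beta N}{2}\,\E_{m_0}\!\co|Y|^2\cf .
\]
The whole proof then amounts to checking that each of the four terms on the right is at most a constant (depending only on $U,\gamma,\sigma$ and $m_0$) times $N$.

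Three of the four terms are pure bookkeeping. First, $\int m_0\ln m_0$ is a finite constant depending only on $m_0$: its positive part $\int m_0\ln_+m_0$ is finite because $m_0\in L\log L$, and its negative part is controlled by the finite second moment of $m_0$ (comparing $m_0$ to a fixed Gaussian $\gamma$, the map $t\mapsto t\ln_- t$ being bounded and $-\int m_0\ln\gamma$ finite, so that in fact $\ent{m_0}{\gamma}<\infty$); hence the first term is $\leq KN$, and so is the last one since $\E_{m_0}|Y|^2<\infty$. For the third term I would use that $V$ and $W$ have at most quadratic growth (their Hessians being bounded, cf. Assumption \ref{Hypo1} and Lemma \ref{LemHessienneUN}, and $\na W(0)=0$ since $W$ is even); writing $U_N(x)=\sum_i V(x_i)+\tfrac1{2N}\sum_{i,j}W(x_i-x_j)$ and using that under $m_0^{\otimes N}$ the position components are i.i.d., one gets $\E_{m_0^{\otimes N}}[U_N]=N\,\E_{m_0}[V(X)]+\tfrac{W(0)}2+\tfrac{N-1}2\,\E_{m_0}[W(X_1-X_2)]$, and both expectations are finite because $m_0$ has a finite second moment, so this term is $\leq KN$ as well.

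The only term requiring real care — and the main obstacle — is $\ln Z_N$, that is, $\ln\int_{\R^{dN}}e^{-\beta U_N}\,\dd x$ (the Gaussian factor contributes the harmless $\tfrac{dN}2\ln(2\pi/\beta)$). The convexity estimate $\na^2 U_N\geq(c_1-2c_2)I$ of Lemma \ref{LemHessienneUN} by itself is not enough, since it leaves open how negative $\min U_N$ could be; instead I would establish a \emph{separable} pointwise lower bound. Using $\na^2 V\geq c_1 I$, $\na^2 W\geq -c_2 I$, $\na W(0)=0$, and the elementary identity $\sum_{i,j}|x_i-x_j|^2=2N\sum_i|x_i|^2-2|\sum_i x_i|^2\leq 2N\sum_i|x_i|^2$, a double Taylor expansion yields
\[
U_N(x)\;\geq\;\sum_{i=1}^N\po V(0)+\na V(0)\cdot x_i+\tfrac{c_1-c_2}2|x_i|^2\pf+\frac N2\,W(0),
\]
and since $c_1-c_2>\tfrac{c_1}2>0$ the right-hand side is a sum of $N$ identical nondegenerate quadratics. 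Exponentiating and integrating then factorises into the $N$-th power of a single Gaussian integral $G_0<\infty$ depending only on $U,\gamma,\sigma$, giving $\int e^{-\beta U_N}\leq\po e^{-\beta W(0)/2}G_0\pf^{N}$ and hence $\ln Z_N\leq KN$.

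This is precisely where the assumption $c_2<\tfrac12 c_1$ enters, and it cannot be dropped: for a sufficiently attractive interaction (e.g. $W$ concave at infinity) one could spread the particles out so as to make $\min U_N$ scale like $-N^2$, which would make $\ln Z_N$, and thus the entropy, grow quadratically in $N$. Combining the four estimates yields $\ent{m_0^{\otimes N}}{m_\infty^{(N)}}\leq KN$ with $K$ depending only on $U,\gamma,\sigma$ and $m_0$. (Alternatively, once the velocity variable — whose marginal equilibrium is an explicit Gaussian — is split off, this is exactly the estimate obtained by Malrieu \cite{Malrieu2001} for the position marginals.)
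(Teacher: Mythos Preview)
Your proof is correct and follows essentially the same route as the paper: expand the relative entropy into the self-entropy of $m_0^{\otimes N}$, the expected potential $\int\Psi_N\,m_0^{\otimes N}$, and the log-partition $\ln\int e^{-\Psi_N}$, then bound each by $KN$ using the finite second moment of $m_0$ and the quadratic growth of $U_N$.

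The only notable difference lies in the treatment of $\ln Z_N$. The paper first translates so that $V$ and $W$ are nonnegative and vanish at the origin, then simply drops the interaction to get $U_N(x)\geq\sum_i V(x_i)$ and hence $\int e^{-\Psi_N}\leq\po\int e^{-\beta V(x_1)-\beta|y_1|^2/2}\pf^N$. You instead keep $W$ and produce an explicit separable quadratic lower bound via Taylor expansion and the identity $\sum_{i,j}|x_i-x_j|^2\leq 2N\sum_i|x_i|^2$. Your route is slightly heavier but arguably more robust: the paper's ``translate $W$ to be nonnegative'' step tacitly requires $W$ to be bounded below, which is not guaranteed by Assumption~\ref{Hypo1} alone (e.g.\ $W(x)=-\tfrac{c_2}{2}|x|^2$ with $0<c_2<\tfrac{c_1}{2}$ is admissible but unbounded below). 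Your argument sidesteps this by using only the Hessian bound $\nabla^2 W\geq -c_2 I$ and the evenness $\nabla W(0)=0$, and is precisely where the constraint $c_2<\tfrac{c_1}{2}$ (hence $c_1-c_2>0$) earns its keep.
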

\begin{proof}
Without loss of generality, up to some translations, we suppose the potentials $V$ and $W$ are positive and vanish at the origin. Writing $\Psi_N(x,y) = \frac{2\gamma}{\sigma^2}\po U_N(x) + \frac{|y|^2}{2}\pf$, note that from Lemma \ref{LemHessienneUN}, $\Psi_N\ \leq\ K \po|x|^2+|y|^2\pf$ for some $K$ that do not depend on $N$. Then,
\begin{eqnarray*}
& & \ent{m_0^{\otimes N}}{m_\infty^{(N)}} \\
 & = & \int m_0^{\otimes N}   \ln\po m_0^{\otimes N} \pf +  \int m_0^{\otimes N}  \Psi_N   + \ln \int e^{-\Psi_N}\\
&\leq & N  \po\int m_0 \ln m_0  + K \int m_0 \po|x_1|^2+|y_1|^2\pf +\ln \int e^{-\frac{2\gamma}{\sigma^2}\po V(x_1)+\frac{|y_1|^2}{2}\pf}\pf.
\end{eqnarray*}
\end{proof}

\begin{lem}\label{LemCoupleEmpirique}
Let $\nu_1$ and $\nu_2$ be probability laws on $\R^{dN}=\po\R^d\pf^{N}$ which are fixed by any permutation of the  $d$-dimensional coordinates (in other words, if  $(A_{i})_{i\in\cco 1,N\ccf}$ is of law $\nu$, the $A_i$'s are  interchangeable). Let $(A,B)=(A_i,B_i)_{i\in\cco 1,N\ccf}$ be a coupling of $\nu_1$ and $\nu_2$ such that
\[\mathbb E\po |A-B|^2\pf = \mathcal W^2_2(\nu_1,\nu_2).\]
Then
\begin{eqnarray*}
\mathbb E\po \mathcal W^2_2 \po \frac1N\sum_{i=1}^N \delta_{A_i}, \frac1N\sum_{i=1}^N \delta_{B_i}\pf \pf & \leq & \frac{1}{N} \mathcal W^2_2(\nu_1,\nu_2).
\end{eqnarray*} 
\end{lem}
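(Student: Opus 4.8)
The plan is to turn the prescribed optimal coupling $(A,B)$ of $\nu_1$ and $\nu_2$ into a coupling of the two empirical measures by pairing the components carrying the same index, and to control its transport cost realisation by realisation.

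Concretely, I would fix an outcome of $(A,B)=\po(A_i,B_i)\pf_{1\leq i\leq N}$ and introduce the probability measure
\[
\pi \ :=\ \frac1N\sum_{i=1}^N \delta_{(A_i,B_i)}
\]
on $\R^d\times\R^d$. Its two marginals are $\frac1N\sum_{i=1}^N\delta_{A_i}$ and $\frac1N\sum_{i=1}^N\delta_{B_i}$, so $\pi$ is an admissible coupling of these empirical measures; hence, using that $\W_2^2$ is the infimum of the transport cost over couplings and that $|\cdot|$ is the Euclidean norm on $\R^{dN}$,
\[
\W_2^2\po \frac1N\sum_{i=1}^N\delta_{A_i},\ \frac1N\sum_{i=1}^N\delta_{B_i}\pf \ \leq\ \int_{\R^d\times\R^d}|a-b|^2\,\pi(\dd a,\dd b)\ =\ \frac1N\sum_{i=1}^N|A_i-B_i|^2\ =\ \frac1N|A-B|^2 .
\]
Taking expectations and using that $(A,B)$ realises $\W_2^2(\nu_1,\nu_2)$ would then give
\[
\mathbb E\po \W_2^2\po \frac1N\sum_{i=1}^N\delta_{A_i},\ \frac1N\sum_{i=1}^N\delta_{B_i}\pf\pf \ \leq\ \frac1N\,\mathbb E|A-B|^2\ =\ \frac1N\,\W_2^2(\nu_1,\nu_2),
\]
which is exactly the assertion.

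The only technical point to address is measurability of $(A,B)\mapsto\W_2^2\po\frac1N\sum\delta_{A_i},\frac1N\sum\delta_{B_i}\pf$, so that the left-hand expectation makes sense — this is standard, since the Wasserstein distance between two $N$-atom empirical measures is even continuous in the positions of the atoms (it is the value of a finite linear program). I therefore do not expect any real obstacle here; the whole content of the statement is the observation that the index-by-index pairing is already a legitimate transport plan between the empirical measures. I note in passing that the interchangeability hypothesis on $\nu_1,\nu_2$ is not used in this argument; it is recorded because it is the framework in which the lemma will be invoked (with $\nu_1$ equal to $m_t^{(N)}$ or $m_\infty^{(N)}$ and $\nu_2=m_t^{\otimes N}$ or $m_\infty^{\otimes N}$), where interchangeability is what makes the empirical measures the pertinent objects for propagation of chaos.
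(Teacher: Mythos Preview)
Your argument is correct and is essentially the same as the paper's: the paper introduces a uniform random index $I$ and observes that, conditionally on $(A,B)$, the pair $(A_I,B_I)$ has law $\frac1N\sum_i\delta_{(A_i,B_i)}$, which is exactly your coupling $\pi$, leading to the same pointwise bound $\W_2^2\leq\frac1N|A-B|^2$ before taking expectations. Your remark that interchangeability is not actually used in the proof is also accurate.
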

\begin{proof}
Let $I$ be uniformly distributed on $\cco 1,N\ccf$. Then $(A_I,B_I)$ is a coupling of $ \frac1N\sum \delta_{A_i}$ and $ \frac1N\sum \delta_{B_i}$, hence
\begin{eqnarray*}
\mathbb E\po \mathcal W^2_2 \po \frac1N\sum_{i=1}^N \delta_{A_i}, \frac1N\sum_{i=1}^N \delta_{B_i}\pf \pf & \leq & \mathbb{E}\po |A_I-B_I|^2\pf\\
& = & \frac1N \mathbb E\po |A-B|^2\pf.
\end{eqnarray*} 
\end{proof}

\textbf{Remark:}  consider the $\mathcal W_2$ metric on $\mathcal P\po \mathcal P(\R^d)\pf$ when  $\mathcal P(\R^d)$ is itself endowed with the Euclidean $\mathcal W_2$ metric. Let $\Pi_N$ be the application from $  \mathcal P\po \R^{dN} \pf$ to $\mathcal P\po \mathcal P(\R^d)\pf$ defined by $\Pi_N(\nu) = \mathcal L\po\frac1N \sum \delta_{X_i},X\sim \nu\pf$. Lemma \ref{LemCoupleEmpirique} implies that for laws of interchangeable particles,
\[\mathcal W_2^2 \po \Pi_N(\nu_1),\Pi_N(\nu_2)\pf \leq \frac1N\mathcal W^2(\nu_1,\nu_2).\]

\begin{proof}[Proof of Corollary \ref{CorPW2}]
Consider $(Z,\overline{Z})=(Z_i,\overline{Z}_i)_{i\in\cco 1,N\ccf}$ an optimal coupling of $m_t^{(N)}$ and $m_\infty^{\otimes N}$, in the sense
\[\mathbb{E}\po |Z-\overline Z|^2\pf = \mathcal{W}_2^2\po m_t^{(N)},m_\infty^{\otimes N}\pf.\]
Let $M_t^N = \frac1N\sum \delta_{Z_i}$ and $\overline M^N = \frac1N\sum \delta_{\overline Z_i}$, so that
\begin{eqnarray*}
\mathbb P\po \mathcal W_2\po M_t^N , m_\infty \pf\geq \varepsilon \pf & \leq & \mathbb P\po \mathcal W_2\po M_t^N , \overline M^N\pf\geq \frac\varepsilon2 \pf \\
& & +\ \mathbb P\po\mathcal W_2\po \overline M^N,  m_\infty \pf\geq \frac\varepsilon2 \pf.
\end{eqnarray*}
Since the $\overline Z_i$'s are independent, the second term falls within the scope of \cite[Theorem 1.1]{BolleyGuillinVillani} (the log-Sobolev inequality satisfied by $m_\infty$ implies a $T_2$ Talagrand one), which gives an exponential bound for $N$ large enough. As far as the first term is concerned, from Lemma \ref{LemCoupleEmpirique} and the Markov inequality,
\[\mathbb P\po \mathcal W_2\po M_t^N , \overline M^N\pf\geq \frac\varepsilon2 \pf \leq \frac{8}{N\varepsilon^2}\po \W_2^2(m_t^{(N)},m_\infty^{(N)}) + \W_2^2(m_\infty^{(N)},m_\infty^{\otimes N},)\pf.\]
The last term is bounded by $K$ from Lemma \ref{LemChaosEquilibre}. On the other hand the law $m_\infty^{(N)}$ satisfies a log-Sobolev (hence $T_2$) inequality with a constant that does not depend on $N$, which together with Theorem \ref{TheoNLine} and  Lemma \ref{LemPropMalrieu} yields
\begin{eqnarray}\label{EqW2KN}
\W_2^2\po m_t^{(N)},m_\infty^{(N)}\pf & \leq & Ke^{-\chi t}\ent{m_0^{(N)}}{m_\infty^{(N)}}\notag\\
& \leq & K N e^{-\chi t}.
\end{eqnarray}
Altogether we have obtained
\begin{eqnarray*}
\mathbb P\po \mathcal W_2\po M_t^N , m_\infty \pf\geq \varepsilon \pf & \leq &  \frac{K e^{-\chi t}}{\varepsilon^2}  + \frac{K }{N \varepsilon^2}  + e^{-K^{-1}N \varepsilon^2}.
\end{eqnarray*}
\end{proof}

\section{Propagation of chaos}\label{SectionChaos}

The existence and uniqueness of solutions for Equations \eqref{EqVlasovFP} and \eqref{EqParticulNonLinea} under Assumption \ref{Hypo1} are ensured by \cite{Meleard96}. Let $Z_N=(X,Y)$ and $\overline Z_N = (\overline X, \overline Y)$ respectively solve Equations \eqref{EqSystemparticul} and \eqref{EqParticulNonLinea} with the same initial data, of law $m_0$. We first prove a uniform bound on the second moments of both processes:

\begin{lem}\label{LemMoments}
Under Assumptions \ref{Hypo1}, there exists $K>0$ depending only on $U,\gamma,\sigma$ and $m_0$ such that for all $N\geq1,\ t>0$,
\begin{eqnarray*}
\mathbb E\po |X_1(t)|^2 + |\overline X_1(t)|^2  \pf & \leq & K.
\end{eqnarray*}
\end{lem}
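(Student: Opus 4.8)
The strategy is to prove the bound first for the particle system \eqref{EqSystemparticul} — where, by Lemma~\ref{LemHessienneUN}, the global potential $U_N$ is uniformly convex and the dynamics is autonomous — and then to transfer it to the nonlinear process \eqref{EqParticulNonLinea} by a (non uniform in time) finite-horizon propagation of chaos estimate.

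\textbf{A Lyapunov function for \eqref{EqSystemparticul}.} On $\R^{2dN}$ set $\Phi_N(x,y) = U_N(x) + \frac12|y|^2 + \varepsilon\, x\cdot y$, where $\varepsilon>0$ will be fixed small depending only on $c_1-2c_2$ and $\gamma$. Since $W$ is even, $\nabla W(0)=0$, so $\nabla_{x_i}U_N(0)=\nabla V(0)$ and $|\nabla U_N(0)| = \sqrt N\,|\nabla V(0)|$; together with the two-sided Hessian bound of Lemma~\ref{LemHessienneUN} this gives, for $\varepsilon$ small and some $K$ independent of $N$, $\frac1K(|x|^2+|y|^2) - KN \le \Phi_N(x,y) \le K(|x|^2+|y|^2) + KN$. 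Applying Itô's formula to $t\mapsto\Phi_N(Z_N(t))$ with the coefficients of \eqref{EqSystemparticul}, the only point to notice is the exact cancellation of the transport terms: the drift contribution of $U_N + \frac12|y|^2$ reduces to $-\gamma|y|^2 + \frac{\sigma^2}{2}dN$ (this is why the Lyapunov function must involve $U_N$ and not $|x|^2$), while that of $\varepsilon\, x\cdot y$ is $\varepsilon|y|^2 - \varepsilon\gamma\, x\cdot y - \varepsilon\, x\cdot\nabla U_N(x)$. Lemma~\ref{LemHessienneUN} and $|\nabla U_N(0)|=\sqrt N|\nabla V(0)|$ yield $x\cdot\nabla U_N(x)\ge \frac{c_1-2c_2}{2}|x|^2 - KN$, and for $\varepsilon$ small the cross term $-\varepsilon\gamma\,x\cdot y$ is absorbed; altogether the drift of $\Phi_N$ is $\le -\theta(|x|^2+|y|^2) + KN \le -\theta'\Phi_N + KN$ for some $\theta,\theta'>0$ not depending on $N$. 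The process $Z_N$ being non-explosive with polynomially controlled moments (the facts recalled at the start of Section~\ref{SectionSystemedeParticule}, and \cite{Talay,MonmarcheRecuitHypo}), the usual localization makes $\frac{d}{dt}\mathbb E[\Phi_N(Z_N(t))] \le -\theta'\mathbb E[\Phi_N(Z_N(t))] + KN$ rigorous, hence $\mathbb E[\Phi_N(Z_N(t))] \le e^{-\theta' t}\mathbb E[\Phi_N(Z_N(0))] + KN/\theta'$. Since $m_0$ has a finite second moment (Assumption~\ref{Hypo1}), $\mathbb E[\Phi_N(Z_N(0))] \le KN$, so $\mathbb E[\Phi_N(Z_N(t))] \le KN$ for all $N,t$ and, by the lower bound on $\Phi_N$, $\sum_{i=1}^N\mathbb E|X_i(t)|^2 \le KN$. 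As $m_0^{\otimes N}$ is exchangeable and \eqref{EqSystemparticul} is symmetric under permutations of the particles, the $(X_i,Y_i)$ are exchangeable, whence $\mathbb E|X_1(t)|^2 \le K$.

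\textbf{Transfer to the nonlinear process.} A direct Lyapunov estimate on \eqref{EqParticulNonLinea} seems out of reach without an extra smallness assumption: the interaction term $-\overline Y_1\cdot\int\nabla W(\overline X_1-u)\,m_t(u,v)\,du\,dv$ produces, even after the symmetrization trick, a cross contribution of size $\|\nabla^2 W\|_\infty\sqrt{\mathbb E|\overline Y_1|^2\,\mathbb E|\overline X_1|^2}$ which, the dissipation available being only $O(1)$ in velocity and only $O(\varepsilon)$ in position, cannot in general be absorbed. Instead, fix $t>0$ and couple $Z_N$ and $\overline Z_N$ through the same Brownian motion and initial data. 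Writing $a_i = X_i-\overline X_i$, $b_i = Y_i-\overline Y_i$ and using that the coefficients of \eqref{EqSystemparticul}--\eqref{EqParticulNonLinea} are globally Lipschitz (Assumption~\ref{Hypo1}), a standard Grönwall argument, together with the $O(1/N)$ control of the law-of-large-numbers fluctuation $\frac1N\sum_j\nabla U(\overline X_i,\overline X_j) - \int\nabla U(\overline X_i,u)\,m_t(u,v)\,du\,dv$ (valid since, conditionally on $\overline X_i$, the $\overline X_j$ are i.i.d.\ with second moment finite on the compact interval $[0,t]$, e.g.\ by \cite{Meleard96}), gives $\mathbb E|X_1(t)-\overline X_1(t)|^2 \le C(t)/N$ for some $C(t)$ possibly depending on $t$. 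Then $\mathbb E|\overline X_1(t)|^2 \le 2\mathbb E|X_1(t)|^2 + 2\,\mathbb E|X_1(t)-\overline X_1(t)|^2 \le 2K + 2C(t)/N$; the left-hand side does not depend on $N$, so letting $N\to\infty$ at fixed $t$ yields $\mathbb E|\overline X_1(t)|^2 \le 2K$, and adding the two bounds concludes. (Equivalently: $\mathrm{Law}(X_1(t))$ converges weakly to $\mathrm{Law}(\overline X_1(t))$ as $N\to\infty$ and $|\cdot|^2$ is lower semicontinuous, so $\mathbb E|\overline X_1(t)|^2 \le \liminf_N\mathbb E|X_1(t)|^2 \le K$.)

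\textbf{Main obstacle.} The heart of the matter is the uniformity in $N$ of the particle-system estimate: it rests on the additive error in the drift inequality for $\Phi_N$ being $O(N)$ rather than $O(N^2)$ — which uses $\nabla W(0)=0$, so that $|\nabla U_N(0)|=O(\sqrt N)$, and the two-sided Hessian bound of Lemma~\ref{LemHessienneUN} — and on $\Phi_N$ being comparable to $|z|^2 + O(N)$ uniformly in $N$. Once this is in hand, the transfer step only requires a $t$-dependent (hence elementary) propagation of chaos rate, precisely because the bound on $\mathbb E|X_1(t)|^2$ is already uniform in $N$ while $\mathbb E|\overline X_1(t)|^2$ is $N$-free.
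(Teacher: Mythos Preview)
Your treatment of the particle system is essentially the paper's: the same Lyapunov function $U_N + \tfrac12|y|^2 + \varepsilon\,x\cdot y$, the same drift computation, and the same use of exchangeability (the paper translates so that $U_N$ vanishes at its minimum, which spares your $O(\sqrt N)$ bookkeeping of $\nabla U_N(0)$, but this is cosmetic).

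For the nonlinear process, however, the paper takes a different and more direct route. You deem a Lyapunov argument ``out of reach'' and instead pass through a finite-horizon propagation-of-chaos bound (with a $t$-dependent constant, relying on a priori moments from \cite{Meleard96}) and then let $N\to\infty$ at fixed $t$. The paper's observation is that a direct Lyapunov argument \emph{does} work --- not on a single nonlinear particle, but on the $N$-fold i.i.d.\ system $\overline Z_N=(\overline X_i,\overline Y_i)_{i\le N}$. Applying the very same $H$ to $\overline Z_N$, the extra drift compared with the interacting system is $\nabla(U_N-\overline U_N)\cdot\nabla_y$, and by independence the expectation of $|\nabla(U_N-\overline U_N)(\overline X)|^2$ is at most $\frac{\|\nabla^2 W\|_\infty^2}{N}\,\mathbb E|\overline X|^2$ (the variance-of-an-empirical-mean computation~\eqref{EqNonDiagonal}). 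For $N$ large this perturbation is absorbed by the $-c_1 H$ dissipation, giving $\partial_t\mathbb E[H(\overline Z_N)] \le -\tfrac{c_1}{4}\mathbb E[H(\overline Z_N)] + c_2 N$ directly. Your route is correct and elegant in its own way, but it outsources the finite-time bound $\sup_{s\le t}\mathbb E|\overline X_1(s)|^2<\infty$ to \cite{Meleard96} and introduces a propagation-of-chaos ingredient \emph{before} Lemma~\ref{LemMoments}, whereas the paper deliberately reverses this dependency (Proposition~\ref{PropChaosGrossier} uses Lemma~\ref{LemMoments}); the paper's argument keeps the lemma self-contained.
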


\begin{proof}
 
 Let $(X,Y)$ solves 
\[\left\{\begin{array}{rcl}
\dd X &=& Y \dd t\\
\dd Y&=& -\gamma Y\dd t - U_N(X) \dd t + \sigma \dd B
\end{array} \right. \]
where $B$ is $dN$-dimensional Brownian motion. Talay proved  in \cite{Talay} uniform in time moment bounds  for this diffusion, but we need to check the dependency on $N$.

Recall that, as a symmetric matrix, $\na^2U_N \geq \eta$ where $\eta$ is given by Lemma \ref{LemSobolevmN} and in particular depends neither on $t$ nor $N$. Up to a translation we can suppose the unique minimum of $U_N$ is 0 and attained at $x=0$. Consider   the generator
\[\mathcal L_N \ = \ y\cdot\na_x   - \po\na U_N(x)+\gamma y\pf \cdot \na_y   + \frac{\sigma^2}{2} \Delta_{y}\]
(which is the dual in $L^2(m_\infty^{(N)})$ of $L_N$ given by \eqref{EqDefiLN}) and
\[H(x,y) \ = \ U_N(x) + \frac12|y|^2 + \varepsilon x\cdot y\]
for some $\varepsilon >0$. Then
\begin{eqnarray*}
\mathcal L_N  H(x,y) &=& -(\gamma-\varepsilon) |y|^2 + \frac{\sigma^2}{2} dN - \varepsilon \gamma x\cdot y - \varepsilon x \cdot \na U_N(x)\\
& \leq & -(\gamma-\varepsilon - \gamma^2 \sqrt{\varepsilon}) |y|^2 -\varepsilon\po\eta - \sqrt{\varepsilon}\pf|x|^2 + \frac{\sigma^2}{2} dN.
\end{eqnarray*}
Note that on the other hand $ U_N(x) \leq  \po\|\na^2 V\|_\infty+ 2\|\na^2 W\|_\infty\pf |x|^2$. For $\varepsilon$ small enough (and independent from $t$ and $N$) we thus have
\begin{eqnarray*}
\mathcal L_N  H & \leq & -c_1 H  + c_2 N
\end{eqnarray*}
where $c_1$ and $c_2$ are independent from $t$ and $N$. The Gr\"onwall Lemma yields
\begin{eqnarray*}
\mathbb E\po H\po X(t),Y(t)\pf\pf & \leq & \mathbb E\po H\po X(0),Y(0)\pf\pf + \frac{c_2 N}{c_1} \\
& \leq &   KN 
\end{eqnarray*}
where we used interchangeability together with $U_N(x) \leq K |x|^2$. We conclude the case of $(X,Y)$ with interchangeability again and $U_N(x) \geq \eta |x|^2$.

\bigskip

Now let $(\overline{X},\overline{Y})$ solve \eqref{EqParticulNonLinea}  and write
\begin{eqnarray}\label{EqUNbar}
\overline U_N(x) & = &\sum_{i=1}^N \int   U(x_i,v) m_t(v,w) \dd v \dd w,
\end{eqnarray}
 so that
\begin{eqnarray*}
\lefteqn{\partial_t   \mathbb E\po H\po \overline{X}(t),\overline{Y}(t)\pf \pf=}\\
& & \mathbb  E\po \po \mathcal L_N   + \na (U_N -\overline U_N)\na_y\pf H\po \overline{X}(t),\overline{Y}(t)\pf \pf  \\
&\leq & - \frac12 c_1  \mathbb E\po H\po \overline{X}(t),\overline{Y}(t)\pf \pf + c_2 N + c_3  \mathbb  E\po| \na (U_N -\overline U_N)|^2\po \overline{X}(t),\overline{Y}(t)\pf \pf
\end{eqnarray*}
for some $c_3$ independent from $t$ and $N$. The $(\overline X_i,\overline Y_i)$'s being independent with law $m_t$,
\begin{eqnarray}\label{EqNonDiagonal}
& &  \mathbb  E\po| \na (U_N -\overline U)|^2\po \overline{X}(t),\overline{Y}(t)\pf \pf  \notag\\
& = &\frac1{N}\mathbb E\po \left|\sum_{j=1}^N \na W(\overline X_1-\overline X_j)-\int \na W(\overline X_1-u)m_t(u,v)\right|^2 \pf \notag\\
 &=&  \frac1{N}\mathbb E\po \sum_{j=1}^N\left| \na W(\overline X_1-\overline X_j)-\int \na W(\overline X_1-u)m_t(u,v)\right|^2 \pf \notag\\
 &\leq & \frac{\|\na^2 W\|_\infty^2}{N} \mathbb E\po |\overline{X}(t)|^2\pf .
\end{eqnarray}
As a consequence for $N$ large enough
\begin{eqnarray*}
\partial_t   \mathbb E\po H\po \overline{X}(t),\overline{Y}(t)\pf \pf &\leq & -\frac{c_1}{4}  \mathbb E\po H\po \overline{X}(t),\overline{Y}(t)\pf \pf + c_2 N
\end{eqnarray*}
and the conclusion is similar to the case of $(X,Y)$.
\end{proof}

In a first instance, from a classical strategy (the parallel coupling of $Z_N$ and $\overline Z_N$), we prove a propagation of chaos estimate  which badly behaves in time:
\begin{prop}\label{PropChaosGrossier}
Under Assumptions \ref{Hypo1}, there exist $b>0$ (depending only on $U,\gamma$ and $\sigma$)  and $K>0$ (depending only on $U,\gamma,\sigma$ and $m_0$) such that for all $N\geq 1, \ t>0$, if $Z_N$ and $\overline Z_N$ respectively solve Equations \eqref{EqSystemparticul} and \eqref{EqParticulNonLinea} driven by the same Brownian motion, then
\begin{eqnarray*}
\mathbb E\po \left|Z_N(t)-\overline Z_N(t)\right|^2\pf & \leq &  K e^{bt} .
\end{eqnarray*}
\end{prop}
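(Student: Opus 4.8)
The plan is the standard parallel (synchronous) coupling estimate: write down the SDE satisfied by the difference $Z_N - \overline Z_N$, compute the time derivative of $\mathbb E |Z_N(t) - \overline Z_N(t)|^2$, bound the right-hand side by $C\,\mathbb E|Z_N(t)-\overline Z_N(t)|^2$ plus an error term that is uniformly controlled by Lemma \ref{LemMoments}, and close with Gr\"onwall. Since the two systems are driven by the same Brownian motion, the stochastic terms cancel in the difference, so the difference process is actually an ODE with random coefficients; this is what makes the plain $L^2$ computation possible.

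\textbf{Step 1.} Setting $D_i = X_i - \overline X_i$ and $E_i = Y_i - \overline Y_i$, subtract \eqref{EqParticulNonLinea} from \eqref{EqSystemparticul} to get $\dd D_i = E_i\,\dd t$ and
\[
\dd E_i = -\gamma E_i\,\dd t - \left( \frac1N\sum_{j=1}^N \na_x U(X_i,X_j) - \int \na_x U(\overline X_i,u)\, m_t(u,v)\,\dd u\,\dd v \right)\dd t .
\]
Split the force difference into three pieces: (a) $\frac1N\sum_j \big(\na_x U(X_i,X_j) - \na_x U(\overline X_i,\overline X_j)\big)$, which is Lipschitz in $(D,E)$ with a constant controlled by $\|\na^2 V\|_\infty + 2\|\na^2 W\|_\infty$; (b) $\frac1N\sum_j \na_x U(\overline X_i,\overline X_j) - \int \na_x U(\overline X_i,u)\,m_t(u,v)\,\dd u\,\dd v$, which is the genuine fluctuation term. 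Using the structure $U(x,u)=V(x)+V(u)+W(x-u)$, the $V$-part of the fluctuation vanishes identically (it does not depend on $u$), so only the $W$-part remains.

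\textbf{Step 2.} Compute, using $\dd |D_i|^2 = 2 D_i\cdot E_i\,\dd t$ and $\dd|E_i|^2 = 2E_i\cdot \dd E_i$,
\[
\frac{\dd}{\dd t} \mathbb E\left( |D_i|^2 + |E_i|^2 \right) = 2\,\mathbb E(D_i\cdot E_i) - 2\gamma\,\mathbb E|E_i|^2 - 2\,\mathbb E\big(E_i\cdot (\text{force difference})\big).
\]
For the Lipschitz part (a) use Cauchy--Schwarz and interchangeability to bound it by $C\,\mathbb E(|D|^2+|E|^2)$ (summing over $i$). For the fluctuation part (b), Cauchy--Schwarz gives a term $\mathbb E|E_i|^2$ plus the quantity $\mathbb E\big|\frac1N\sum_j \na W(\overline X_i-\overline X_j) - \int\na W(\overline X_i-u)m_t(u,v)\big|^2$; this is exactly the expression estimated in \eqref{EqNonDiagonal} in the proof of Lemma \ref{LemMoments}, namely it is $\leq \frac{\|\na^2 W\|_\infty^2}{N}\,\mathbb E|\overline X_i(t)|^2 \leq \frac{K}{N} \leq K$ by Lemma \ref{LemMoments}. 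Summing over $i\in\cco 1,N\ccf$, the fluctuation contributes at most a constant $K$ (uniformly in $N$ and $t$). Altogether
\[
\frac{\dd}{\dd t}\,\mathbb E|Z_N(t)-\overline Z_N(t)|^2 \ \leq\ b\,\mathbb E|Z_N(t)-\overline Z_N(t)|^2 + K
\]
for some $b,K$ depending only on $U,\gamma,\sigma$ (and $m_0$ for $K$).

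\textbf{Step 3.} Since $Z_N(0)=\overline Z_N(0)$, Gr\"onwall's lemma gives $\mathbb E|Z_N(t)-\overline Z_N(t)|^2 \leq \frac{K}{b}(e^{bt}-1) \leq K e^{bt}$, which is the claim (absorbing $K/b$ into $K$).

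\textbf{Main obstacle.} The only delicate point is the fluctuation term in Step 2: one must exploit the independence of the $(\overline X_i,\overline Y_i)$'s (so that the variance of the empirical average of $\na W$ is $O(1/N)$ rather than $O(1)$) and the uniform second-moment bound of Lemma \ref{LemMoments} to see that this term does not blow up in $N$; this is precisely the computation \eqref{EqNonDiagonal}. Everything else is a routine Lipschitz-plus-Gr\"onwall argument, and the exponential-in-time growth here is exactly what the later sections will have to fight by supplementing this short-time estimate with the coupling at equilibrium.
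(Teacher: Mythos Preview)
Your proposal is correct and follows essentially the same route as the paper: synchronous coupling, the same two-piece decomposition of the force difference into a Lipschitz part and a fluctuation part, the reference to \eqref{EqNonDiagonal} together with Lemma~\ref{LemMoments} to control the fluctuation, and Gr\"onwall to close. The only cosmetic difference is that you spell out explicitly that the $V$-contribution in the fluctuation term vanishes, whereas the paper directly writes the remaining $W$-part.
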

Note that by interchangeability, this reads
\begin{eqnarray*}
\mathbb E\po \left|X_1(t)- \overline X_1(t)\right|^2 + \left|Y_1(t)-\overline   Y_1(t)\right|^2 \pf & \leq &  \frac{K e^{bt}}N 
\end{eqnarray*}
and, recalling that $m_t^{(1,N)}$ stands for the law of $(X_1,Y_1)$, it implies
\begin{eqnarray*}
\mathcal W_2^2\po m_t^{(1,N)},m_t \pf & \leq &  \frac{K e^{bt}}N .
\end{eqnarray*}

\begin{proof}
Let $(x,y) = \po X - \overline X,Y-\overline Y\pf$. The potential $U$ being Lipschitz, it is clear there exists $b'>0$ such that
\begin{eqnarray*}
\partial_t\po |x|^2 + |y|^2\pf & \leq & b' \po |x|^2 + |y|^2\pf \\
& &- \frac2N \sum_{i,j=1}^N y_i\po \na W(X_i-X_j) - \int W(\overline X_i - u) m_t(u,v)\pf  .
\end{eqnarray*}
Decomposing the last factor in 
\begin{eqnarray*}
  \lefteqn{\po \na W(X_i-X_j) -\na W(\overline X_i- \overline X_j)\pf}\\
& &   + \po \na W(\overline X_i- \overline X_j)  -  \int W(\overline X_i - u) m_t(u,v) \pf,
\end{eqnarray*}
and  using $W$ is Lipschitz, the bound \eqref{EqNonDiagonal} and the moment estimate of Lemma~\ref{LemMoments}, we obtain
\begin{eqnarray*}
\partial_t \mathbb E\po |x|^2 + |y|^2\pf & \leq & b  \mathbb E \po |x|^2 + |y|^2\pf + K
\end{eqnarray*}
 for some $b>0$, which concludes.
\end{proof}

From this first rough estimate, together with the convergence in large time of the particle system given by Theorem \ref{TheoNLine}, we obtain a first result in large time for the non-linear process:
\begin{prop}\label{PropWassersteinNonlineaire}
Under Assumptions \ref{Hypo1}, there exists $K>0$ depending only on $U,\gamma,\sigma$ and $m_0$ such that for all $  t>0$,
\begin{eqnarray*}
\mathcal W_2^2\po m_t,m_\infty \pf & \leq & K e^{-\chi t}.
\end{eqnarray*}
where $\chi$ is given by Theorem \ref{TheoNLine}.
\end{prop}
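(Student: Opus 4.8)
The plan is to compare $m_t$ and $m_\infty$ through the $N$-particle system, exploiting that neither $m_t$ nor $m_\infty$ depends on $N$ so that $N$ can be sent to infinity at the end. I would interpose the first $2d$-marginals $m_t^{(1,N)}$ of $m_t^{(N)}$ and $m_\infty^{(1,N)}$ of $m_\infty^{(N)}$ and apply the triangle inequality for the metric $\mathcal W_2$,
\[
\mathcal W_2\po m_t,m_\infty\pf \ \leq\ \mathcal W_2\po m_t,m_t^{(1,N)}\pf \ +\ \mathcal W_2\po m_t^{(1,N)},m_\infty^{(1,N)}\pf \ +\ \mathcal W_2\po m_\infty^{(1,N)},m_\infty\pf ,
\]
controlling the three terms respectively by the rough propagation of chaos of Proposition \ref{PropChaosGrossier}, by the uniform-in-$N$ entropic convergence of Theorem \ref{TheoNLine}, and by the equilibrium chaos estimate of Lemma \ref{LemChaosEquilibre}.

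The only preliminary fact needed is a marginal contraction for $\mathcal W_2$. If $\mu,\nu$ are probability laws on $\po\R^{2d}\pf^N$ invariant under permutation of the $N$ blocks and $(A,B)$ is a coupling with $\mathbb E|A-B|^2=\mathcal W_2^2(\mu,\nu)$, then introducing a uniform random index $I$ on $\cco 1,N\ccf$ independent of $(A,B)$ exactly as in the proof of Lemma \ref{LemCoupleEmpirique}, the pair $(A_I,B_I)$ couples the first $2d$-marginals $\mu^{(1)}$ and $\nu^{(1)}$, so that
\[
\mathcal W_2^2\po\mu^{(1)},\nu^{(1)}\pf \ \leq\ \mathbb E|A_I-B_I|^2 \ =\ \frac1N\,\mathbb E|A-B|^2 \ =\ \frac1N\,\mathcal W_2^2(\mu,\nu).
\]
Taking $\mu=m_\infty^{(N)}$, $\nu=m_\infty^{\otimes N}$, this together with the first bound of Lemma \ref{LemChaosEquilibre} gives $\mathcal W_2^2\po m_\infty^{(1,N)},m_\infty\pf\le K/N$. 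Taking $\mu=m_t^{(N)}$, $\nu=m_\infty^{(N)}$ and recalling the bound \eqref{EqW2KN}, $\mathcal W_2^2\po m_t^{(N)},m_\infty^{(N)}\pf\le KN e^{-\chi t}$ (itself obtained from the $N$-uniform $T_2$ Talagrand inequality for $m_\infty^{(N)}$ — a consequence of the log-Sobolev inequality of Lemma \ref{LemSobolevmN} — together with Theorem \ref{TheoNLine} and Lemma \ref{LemPropMalrieu}), it gives $\mathcal W_2^2\po m_t^{(1,N)},m_\infty^{(1,N)}\pf\le Ke^{-\chi t}$; the role of the marginal contraction is precisely to turn the bound $KNe^{-\chi t}$, which grows with $N$, into the $N$-independent quantity $Ke^{-\chi t}$.

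It remains to combine these with Proposition \ref{PropChaosGrossier}, which by interchangeability yields $\mathcal W_2^2\po m_t^{(1,N)},m_t\pf\le Ke^{bt}/N$. Squaring the triangle inequality above (at the cost of a harmless factor $3$ from a sum of three terms) gives, for every $N\geq 1$ and $t>0$,
\[
\mathcal W_2^2\po m_t,m_\infty\pf \ \leq\ K\po \frac{e^{bt}}{N}\ +\ e^{-\chi t}\ +\ \frac1N\pf ,
\]
with $K$ depending only on $U,\gamma,\sigma$ and $m_0$ and, crucially, not on $N$. Since the left-hand side does not involve $N$, letting $N\to\infty$ yields the claim $\mathcal W_2^2\po m_t,m_\infty\pf\le Ke^{-\chi t}$.

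The only point genuinely requiring care is the uniformity in $N$ of the constant in the bound for $\mathcal W_2^2\po m_t^{(1,N)},m_\infty^{(1,N)}\pf$: it rests on the already-established uniformity of three ingredients — the log-Sobolev (hence $T_2$) constant of $m_\infty^{(N)}$ from Lemma \ref{LemSobolevmN}, the rate $\chi$ and prefactor in Theorem \ref{TheoNLine}, and the $O(N)$ growth of the initial relative entropy in Lemma \ref{LemPropMalrieu} — so that the factor $N$ from the last one is exactly absorbed by the $1/N$ of the marginal contraction. Everything else is routine bookkeeping with the triangle inequality.
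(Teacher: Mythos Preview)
Your proof is correct and follows the same route as the paper: interpose the particle system via the triangle inequality, control the three pieces by Proposition~\ref{PropChaosGrossier}, the bound~\eqref{EqW2KN}, and Lemma~\ref{LemChaosEquilibre}, and send $N\to\infty$. The only cosmetic difference is that the paper works directly at the $N$-particle level using the tensorization identity $\mathcal W_2^2(m_t^{\otimes N},m_\infty^{\otimes N})=N\,\mathcal W_2^2(m_t,m_\infty)$ rather than passing to $1$-marginals via your contraction inequality; the ingredients and the final bound are identical.
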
 
\begin{proof}
From the bound~\eqref{EqW2KN}, together with Proposition \ref{PropChaosGrossier}, Lemma \ref{LemChaosEquilibre} and interchangeability, it comes
\begin{eqnarray*}
& & \mathcal{W}_2^2\po m_t ,m_\infty \pf \\
 & = & \frac1N 
\mathcal{W}_2^2\po m_t^{\otimes N},m_\infty^{\otimes N}\pf \\
& \leq & \frac1N \po
\mathcal{W}_2\po m_t^{\otimes N},m_t^{(N)}\pf + \mathcal{W}_2\po m_t^{(N)},m_\infty^{(N)}\pf  + \mathcal{W}_2\po m_\infty^{(N)},m_\infty^{\otimes N}\pf  \pf^2 \\
& \leq & \frac KN \po
 e^{b t} +  N e^{-\chi t}  \pf 
\end{eqnarray*}
and we can let $N$ go to infinity.
\end{proof}

Combining our previous propagation of chaos results (at equilibrium, or with an exponential prefactor) together with the convergence in large time ones (for the particle system and for the non-linear process) we can now prove the claimed uniform in time propagation of chaos result:

\begin{proof}[Proof of Theorem \ref{ThmChaosUniforme}]
According to Proposition \ref{PropChaosGrossier}, for $t\leq \varepsilon \ln N $ for some $\varepsilon>0$,
\begin{eqnarray*}
\mathcal{W}_2^2\po m_t^{(1,N)},m_t \pf & \leq & \frac{K}{N^{1-b\varepsilon}}.
\end{eqnarray*}
For $t\geq \varepsilon \ln N$, according to Proposition \ref{PropWassersteinNonlineaire}, Theorem \ref{TheoNLine} and Lemma \ref{LemChaosEquilibre},
\begin{eqnarray*}
\mathcal{W}_2^2\po m_t^{(1,N)},m_t \pf
 & \leq & \frac1N \Big( \mathcal{W}_2 \po m_t^{(N)},m_\infty^{(N)} \pf + \mathcal{W}_2 \po m_\infty^{(N)} ,m_\infty^{\otimes N} \pf\\
 & & +\ \mathcal{W}_2 \po m_\infty^{\otimes N},m_t^{\otimes N} \pf\Big)^2\\
& \leq & K \po \frac{1}{N^{\varepsilon\chi }} + \frac 1 N\pf.
\end{eqnarray*}
Taking $\varepsilon = (\chi+b)^{-1}$, we obtain the result with $\alpha = \po 1 + b/\chi\pf^{-1}$.
\end{proof}

One could expect the result to hold with $\alpha=\frac12$, which is the case in the space-homogeneous Mc Kean-Vlasov equation. Nevertheless,  since the $b$ obtained in the proof of Proposition \ref{PropChaosGrossier} is clearly greater than 1, the best lower bound we could get on $\alpha$ would be less than $\chi$ given in \eqref{EqborneChi}, which is pretty bad. Note that in the quadratic case, the results of \cite{BolleyGentilGuillin} applies, so that Theorem \ref{ThmChaosUniforme} holds with $\alpha = \frac12$. 

\section{The Vlasov-Fokker-Planck equation}\label{SectionVFP}

\begin{lem}\label{LemEntropieBorne}
Under Assumptions \ref{Hypo1} there exist $K$ depending only on $U,\gamma,\sigma$ and $m_0$ such that for all $N\geq 1,\ t>0$,
\begin{eqnarray*}
\| m_t^{(1,N)} - m_t \|_1 & \leq & \frac{ K\sqrt t}{ N^{\frac\alpha4}}  
\end{eqnarray*}
where $\alpha$ is given by Theorem \ref{ThmChaosUniforme}.
\end{lem}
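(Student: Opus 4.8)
The plan is to upgrade the $\mathcal{W}_2$ bound of Theorem \ref{ThmChaosUniforme} into an $L^1$ (total variation) bound, at the cost of a worse power of $N$ and a factor $\sqrt t$. The natural tool is an entropy--entropy production (HWI-type) argument, or more simply the combination of Talagrand's and Pinsker's inequalities together with a logarithmic-Sobolev / smoothing estimate. Concretely, one wants an inequality of the form $\|\mu - \nu\|_1^2 \lesssim \mathcal{H}(\mu\,|\,\nu) \lesssim (\text{something})\cdot \mathcal{W}_2(\mu,\nu)$, and then plug in $\mathcal{W}_2(m_t^{(1,N)},m_t)\leq K N^{-\alpha/2}$ from Theorem \ref{ThmChaosUniforme}.

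First I would set $\mu = m_t^{(1,N)}$ and $\nu = m_t$ and try to control $\mathcal{H}(m_t^{(1,N)}\,|\,m_t)$. By sub-additivity of entropy for the (interchangeable) joint law, $\mathcal{H}(m_t^{(1,N)}\,|\,m_t)\leq \tfrac1N \mathcal{H}(m_t^{(N)}\,|\,m_t^{\otimes N})$; this is the standard way to pass from the full $N$-particle entropy to the one-particle marginal. Then one needs a bound on $\mathcal{H}(m_t^{(N)}\,|\,m_t^{\otimes N})$. A Grönwall/entropy-production computation along the two coupled evolutions (the particle system generator $L_N$ versus the non-linear one, as in the proof of Lemma \ref{LemMoments}) should give $\partial_t \mathcal{H}(m_t^{(N)}\,|\,m_t^{\otimes N}) \leq -(\text{Fisher information term}) + (\text{error})$, where the error involves $\mathbb{E}|\nabla(U_N - \overline U_N)|^2$; using the log-Sobolev inequality for $m_t^{\otimes N}$ (which holds with an $N$-independent constant by Lemma \ref{LemChaosEquilibre}, since $m_t$ — wait, $m_t$ is not the equilibrium, so one must be a little careful) and Young's inequality one bounds the error by $K N \mathcal{W}_2^2(\cdot,\cdot)$ or similar, leading to $\mathcal{H}(m_t^{(N)}\,|\,m_t^{\otimes N})\leq K t \sup_{s\leq t}\big(\text{small quantity}\big)$, i.e. linear growth in $t$. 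Dividing by $N$ and using Pinsker, $\|m_t^{(1,N)} - m_t\|_1 \leq \sqrt{2\,\mathcal{H}(m_t^{(1,N)}\,|\,m_t)} \leq \sqrt{2 t K/N}\cdot(\dots)$, and interpolating this crude $L^1$ bound against the sharp $\mathcal{W}_2$ bound $N^{-\alpha/2}$ produces the exponent $\alpha/4$ and the $\sqrt t$ prefactor. The factor $N^{\alpha/4}$ rather than $N^{\alpha/2}$ and the extra $\sqrt t$ are exactly the signature of such an interpolation between a $\mathcal{W}_2$ estimate and an entropy/$L^1$ estimate.

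Alternatively, and perhaps more cleanly, I would use a short-time regularization argument: run the non-linear and the empirical/marginal flows for an extra time $\delta>0$, during which the kinetic Fokker--Planck smoothing makes the densities comparable in a strong norm, turning a $\mathcal{W}_2$ bound at time $t-\delta$ into an $L^1$ bound at time $t$ with a loss like $\delta^{-p}$; then optimize $\delta$ against $N^{-\alpha/2}$. This would again yield a polynomial-in-$N$ rate with a $t$-dependent (here $\sqrt t$) constant coming from accumulation of the regularization error over $[0,t]$.

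The main obstacle I expect is controlling the cross term $\mathbb{E}|\nabla(U_N - \overline U_N)|^2$ appearing in the entropy production uniformly in $N$ while keeping track of its $t$-dependence: this is precisely where the $\mathcal{W}_2$ estimate of Theorem \ref{ThmChaosUniforme} feeds back in, and one has to make sure the log-Sobolev constant used (for $m_t^{\otimes N}$, not for the equilibrium) is available with an $N$-independent constant — if it is not uniform in $t$, one must instead compare to $m_\infty^{\otimes N}$, absorb the discrepancy via Proposition \ref{PropWassersteinNonlineaire}, and carry the resulting $t$-dependence, which is what ultimately forces the $\sqrt t$ factor in the statement. A secondary technical point is justifying the entropy differentiation (regularity of densities, finiteness of Fisher information), which is handled by the smoothing properties of the kinetic semigroups recalled in Section \ref{SectionSystemedeParticule}.
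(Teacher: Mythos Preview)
Your overall strategy---differentiate $F(t)=\mathcal{H}\big(m_t^{(N)}\,\big|\,m_t^{\otimes N}\big)$, bound $F'(t)$ by the drift-difference term, integrate, apply Csisz\'ar's sub-additivity to pass to the first marginal, then Pinsker---is exactly the paper's approach. However, two points in your outline are off and obscure where the exponent $\alpha/4$ actually comes from.

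First, no log-Sobolev inequality for $m_t^{\otimes N}$ is needed, and the detour through $m_\infty^{\otimes N}$ you worry about is unnecessary. The entropy-production identity gives
\[
F'(t) \;=\; -\int \Gamma\Big(\ln\tfrac{m_t^{(N)}}{m_t^{\otimes N}}\Big)\,m_t^{(N)} \;+\; \int (b_1-b_2)\cdot\nabla\ln\tfrac{m_t^{(N)}}{m_t^{\otimes N}}\,m_t^{(N)},
\]
and after Young's inequality the Fisher term is simply \emph{discarded} (it has the right sign), leaving only $\frac{1}{2\sigma^2}\int|\nabla U_N-\nabla\overline U_N|^2\,m_t^{(N)}$. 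There is no need to close the inequality back onto $F$ via a log-Sobolev constant for $m_t$.

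Second, there is no interpolation step. By exchangeability the remaining term equals
\[
\frac{N}{2\sigma^2}\,\mathbb{E}\Big|\frac1N\sum_{j}\nabla W(X_1-X_j)-\int\nabla W(X_1-v)\,m_t\Big|^2.
\]
Expanding the square, the $N$ diagonal terms are each $O(1)$ by Lemma~\ref{LemMoments}. For the $O(N^2)$ off-diagonal terms one introduces an optimal coupling $(Z_N,\overline Z_N)$ of $m_t^{(N)}$ and $m_t^{\otimes N}$, splits each factor as $\big[\nabla W(X_1-X_j)-\nabla W(X_1-\overline X_j)\big]+\big[\nabla W(X_1-\overline X_j)-\int\nabla W(X_1-v)\,m_t\big]$, and uses independence of the $\overline X_i$'s to make the pure second-bracket cross products vanish. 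The mixed terms are bounded via Cauchy--Schwarz by $\|\nabla^2 W\|_\infty^2\sqrt{\mathbb{E}|X_1-\overline X_1|^2}\cdot O(1)\leq K N^{-\alpha/2}$, which is precisely where Theorem~\ref{ThmChaosUniforme} enters. This yields $F'(t)\leq K N^{1-\alpha/2}$, hence $F(t)\leq Kt\,N^{1-\alpha/2}$ (with $F(0)=0$); Csisz\'ar gives $\mathcal{H}\big(m_t^{(1,N)}\,\big|\,m_t\big)\leq 2Kt\,N^{-\alpha/2}$, and Pinsker's square root produces both the $\sqrt t$ and the $N^{-\alpha/4}$. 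The $\sqrt t$ is therefore not the trace of an interpolation or of a time-dependent functional inequality, but simply the integration of a uniformly bounded entropy production followed by a square root.
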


\begin{proof}
We follow the idea of \cite[Lemma 3.15]{Malrieu2001}, namely we compute the derivative of
\begin{eqnarray*}
F(t) & = &   \ent{m_t^{(N)}}{m_t^{\otimes N}}  .
\end{eqnarray*}
To do so, let $u_1 = m_t^{(N)}$, $u_2 = m_t^{\otimes N}$,
\[b_1(x,y) = \begin{pmatrix}
y \\ -\gamma y -  \na_x U_N(x)  
\end{pmatrix},\hspace{25pt}
b_2(x,y) = \begin{pmatrix}
y \\ -\gamma y -\na_x \overline U_{N}(x)
\end{pmatrix}
\]
(where $\overline U_N$ is given by \eqref{EqUNbar}) and $L_i f = - \na \cdot \po b_i  f \pf + \frac{\sigma^2}{2} \Delta_y f$ for $i=1,2$. With these notations, $\partial_t \po u_i\pf = L_i u_i$, and the dual in the Lebesgue sense of $L_i$ is $L_i' = b_i\cdot \na + \frac{\sigma^2}{2}  \Delta_y$. From the conservation of the mass of $u_1$, we get
\[  0 = \partial_t\po \int \frac{u_1}{u_2}u_2 \pf = \int \po L_1 u_1  - \frac{u_1}{u_2}L_2 u_2 +  L_2'\po\frac{u_1}{u_2}\pf u_2 \pf .  \]
Since $L_1'$ is a diffusion operator with carr\'e  du champ operator $\Gamma f = \frac{\sigma^2}{2}  |\na_y f|^2$ (see \cite[p.20 \& 42]{BakryGentilLedoux} for the definitions),
\[ u_1 L_1' \ln\po \frac{u_1}{u_2} \pf = u_1 \frac{L_1'\po\frac{u_1}{u_2}\pf}{\frac{u_1}{u_2}} - u_1\frac{\Gamma\po \frac{u_1}{u_2}\pf}{\po \frac{u_1}{u_2}\pf^2} = u_2 L_1'\po\frac{u_1}{u_2}\pf - u_1 \Gamma\po \ln \frac{u_1}{u_2} \pf.\]
Using both these relations,
\begin{eqnarray*}
\partial_t \po \int \ln\po\frac{u_1}{u_2}\pf u_1\pf & = & \int \po \frac{L_1 u_1}{u_1} - \frac{L_2 u_2}{u_2} + L_1' \ln\po\frac{  u_1}{u_2}\pf\pf u_1\\
& = & \int -\Gamma\po \ln \frac{u_1}{u_2} \pf u_1 + u_2 L_1'\po\frac{u_1}{u_2}\pf - u_2 L_2'\po\frac{u_1}{u_2}\pf \\
& = &   \int -\Gamma\po \ln \frac{u_1}{u_2} \pf u_1 +  (b_1-b_2)\cdot \na \ln \po\frac{u_1}{u_2}\pf u_1.
\end{eqnarray*}
Applying Young's Inequality, we get
\begin{eqnarray*}
F'(t) & \leq & \frac1{2\sigma^2} \int \left| \na  U_N(x)  - \na  \overline U_N(x) \right|^2 m_t^{(N)}\\
& = & \frac{N}{2\sigma^2} \mathbb E\po \left|\frac1N \sum_{j=1}^N \na  W(X_1-X_j) - \int \na W(X_1-v) m_t(v,w)\right|^2\pf
\end{eqnarray*}
by interchangeability. Developing the square of the sum, the diagonal terms are bounded by
\begin{eqnarray*}
  \| \nabla ^2 W\|_\infty^2 \po \mathbb  E\po  |X_j|^2\pf + \int |v|^2 m_t(v,w) \pf &\leq & K
\end{eqnarray*}
where we used  Lemma \ref{LemMoments}. For the extra-diagonal terms, we consider an optimal coupling $\overline Z_N=(\overline X,\overline Y)$ and $Z_N=(X,Y)$ in the sense the law of $\overline Z_N$ is $m_t^{\otimes N}$ and
\begin{eqnarray*}
\mathbb E\po \left|\overline Z_N(t)-Z_N(t)\right|^2\pf &=& \mathcal W_2^2\po m_t^{\otimes N},m_t^{(N)}\pf
\end{eqnarray*} 
and write
\begin{eqnarray*}
& & \po \na  W(X_1-X_j) - \int \na  W(X_1-v) m_t \pf  \po  \na  W(X_1-X_k) - \int \na  W(X_1,v) m_t \pf\\
 & = & \po \na  W(X_1-X_j) - \na  W(X_1-\overline X_j)\pf\po  \na  W(X_1-X_k) - \int \na  W(X_1-v) m_t \pf\\
 & & + \po \na  W(X_1-\overline X_j) - \int \na  W(X_1-v) m_t \pf\po  \na  W(X_1-X_k) - \na  W(X_1-\overline X_k)\pf\\
 & & + \po \na  W(X_1-\overline X_j) - \int \na  W(X_1-v) m_t \pf \po \na  W(X_1-\overline X_k) - \int \na  W(X_1-v) m_t \pf.
\end{eqnarray*}
The $\overline X_i$'s being independent with law the first marginal of $m_t$, the expectation of the third term vanishes, while the expectations of the two other terms is bounded by the Cauchy-Schwarz inequality, interchangeability and Theorem~\ref{ThmChaosUniforme} by
\begin{eqnarray*}
  \| \nabla ^2 W\|_\infty^2 \sqrt{\mathbb  E\po  |X_1 - \overline{X}_1|^2\pf \po \mathbb  E\po  |X_1|^2\pf + \mathbb  E\po  |\overline{X}_1|^2\pf  \pf} &\leq &  \frac{K}{N^{\frac\alpha2}}.
\end{eqnarray*}
 Hence $F'(t) \leq KN^{1-\frac\alpha2}$ and moreover $F(0) = 0$, so that 
we conclude by the Csisz{\'a}r's inequality which reads
\[\ent{\mu^{(1,N)}}{\nu} \leq \frac{2}{N} \ent{\mu^{(N)}}{\nu^{\otimes N}}\]
when  $\mu^{(N)}$ is an interchangeable probability law (see \cite{Csiszar}) and the Pinsker's one.
\end{proof}

From this last propagation of chaos estimate (with a not-so-bad behaviour in time) together with the convergence in large time of the particle system, we are finally able to recover the convergence of the total variation distance for the Vlasov-Fokker-Planck equation:
\begin{proof}[Proof of Theorem \ref{TheoNonLineaire}]
From the Pinsker's inequality, Theorem~\ref{TheoNLine} and  Lemma~\ref{LemPropMalrieu},
\begin{eqnarray*}
\| m_t^{(1,N)}-m_\infty^{(1,N)} \|_1^2 & = & \underset{f\in L^\infty\po\R^{2d}\pf}\sup \int f \po m_t^{(1,N)}-m_\infty^{(1,N)}\pf \\
& \leq & \underset{f\in L^\infty\po\R^{2dN}\pf}\sup \int f \po m_t^{(N)}-m_\infty^{(N)}\pf \\
& = & \| m_t^{(N)}-m_\infty^{(N)} \|_1^2 \\
& \leq & 2 \ent{m_t^{(N)}}{m_\infty^{(N)}} \\
& \leq & K Ne^{-\chi t}.
\end{eqnarray*}

By Lemmas \ref{LemChaosEquilibre} and \ref{LemEntropieBorne},
\begin{eqnarray*}
\| m_t - m_\infty \|_1 & \leq & \| m_t - m_t^{(1,N)} \|_1 + \| m_t^{(1,N)} - m^{(1,N)}_\infty \|_1 + \| m^{(1,N)}_\infty - m_\infty \|_1 \\
& \leq & \frac{K\sqrt{ t}}{  N^{\frac\alpha 4}} + \sqrt{K N} e^{- \frac12\chi t} + \frac{K  }{\sqrt N}
\end{eqnarray*}
and we take $N$ of order $e^{\frac{2\chi t}{\alpha + 2}}.$
\end{proof}
\textbf{Remarks:} in turn this leads to a self-improvement of Lemma \ref{LemEntropieBorne} by writing
\begin{eqnarray*}
\| m_t^{(1,N)}-m_t \|_1 & \leq & \| m_t^{(1,N)}-m_\infty^{(1,N)} \|_1 +\| m_\infty -m_\infty^{(1,N)} \|_1 +\| m_t -m_\infty  \|_1\\
&\leq & K \po e^{-\frac12\chi' t} + \frac{1}{N}\pf
\end{eqnarray*}
which, used for, say, $t\geq N^{\frac{\alpha}{8}}$ while Lemma \ref{LemEntropieBorne} is used for small times, reads
\begin{eqnarray*}
\| m_t^{(1,N)}-m_t \|_1 & \leq & \frac{K}{N^{\frac\alpha8}}
\end{eqnarray*}
for all $N\geq1,\ t>0$.

\bigskip

In the quadratic case, we obtain a rate of convergence equal to $\frac14\chi$, where $\chi$ is given at the end of Section \ref{SectionSystemedeParticule}.

\bigskip

\textbf{Acknowledgements:} The author would like to thank Arnaud Guillin and Florent Malrieu for fruitful discussions.





\end{document}